\theoremstyle{plain}
\newtheorem{theorem}{Theorem}[section]
\newtheorem{corollary}[theorem]{Corollary}
\newtheorem{proposition}[theorem]{Proposition}
\theoremstyle{definition}
\newtheorem{definition}{Definition}
\newtheorem{example}{Example}
\theoremstyle{remark}
\newtheorem{remark}{Remark}
\newcommand{\va}{\rightarrow}
\newcommand{\df}{\mathrm{d}}
\newcommand{\al}{\alpha}
\newcommand{\oo}{\Omega}
\newcommand{\ac}{\mathcal{A}}
\newcommand{\cinfm}{\mathcal{C}^\infty (M)}
\newcommand{\der}{\mathrm{Der}}
\newcommand{\er}{\mathrm{End}_{\mathbb{R}}}
\newcommand{\ek}{\mathrm{End}_{\mathbb{K}}}
\newcommand{\om}{\Omega (M)}
\newcommand{\fvv}{\Omega (M;E)}
\newcommand{\hk}{\mathrm{Hom}_{\mathbb{K}}}
\newcommand{\se}{\Upgamma (\Uplambda}
\newcommand{\ges}{\Upgamma (E)}
\newcommand{\ma}{(M,\mathcal{A})}
\newcommand{\str}{\mathcal{S}\mathrm{tr}}
\newcommand{\R}{\mathbb{R}}
\newcommand{\Z}{\mathbb{Z}}
\newcommand{\N}{\mathbb{N}}
\newcommand{\C}{\mathbb{C}}
\newcommand{\K}{\mathbb{K}}
\def\nnabla{\nabla \hskip-2.5mm \nabla}
\begin{document}
\title{Quillen superconnections and connections on supermanifolds}
\author{J. V. Beltr\'an$^1$, J. Monterde$^1$ and J. A. Vallejo$^{2,3}$ \\
{\normalsize $^1$Departament de Matem\'atiques, Universitat de Val\`encia (Spain)}\\
{\normalsize $^2$Departamento de Matem\'aticas, Universidad de Sonora (M\'exico)}\\
{\normalsize $^3$In sabbatical leave. Permanent address: Facultad de Ciencias},\\
{\normalsize Universidad Aut\'onoma de San Luis Potos\'i (M\'exico)}\\
{\footnotesize Email (JAV, corresponding author): \texttt{jvallejo@fc.uaslp.mx}}
}
\date{\today}
\maketitle

\begin{abstract}
Given a supervector bundle $E=E_0\oplus E_1 \to M$, we exhibit a parametrization of Quillen superconnections on $E$ by graded
 connections on the Cartan-Koszul supermanifold $(M,\om )$. The relation between the curvatures of both kind
 of connections, and their associated Chern classes, is discussed in detail. In particular, we find that Chern classes for
 graded vector bundles on split supermanifolds can be computed through the associated Quillen superconnections.
\end{abstract}

\section{Introduction}
Quillen superconnections are ordinary
connections (thought as differential operators
on vector-valued forms), but defined on a vector bundle which carries a $\mathbb{Z}_2 -$grading,
$E=E_0\oplus E_1$, and having odd total degree. They have been widely used both in Physics and Mathematics, ever since their appearance in
\cite{Qui} to construct a representative for the Chern character of the index of a family of elliptic operators.

Ne'eman and Sternberg \cite{NS1,NS2} gave a formulation of a Yang-Mills theory based on a
superconnection (rather than conventional connections), including the scalar Higgs field as one of its
components. That idea was very interesting, as one of the main problems of Yang-Mills theories
is precisely the inclusion of the Higgs as one of the basic bosonic fields. However, the model obtained
this way does not include a quadratic term to provide spontaneous symmetry breaking, so it contains
unwanted massless fields. Further steps to remedy this state of affairs have been given in \cite{Lee1,Roe} (for an alternative
treatment within non-commutative geometry, see \cite{Lee2}).

A second source of interest in superconnections is string theory. $D-$branes, joint with fundamental string
states and ordinary field theoretical solitons, form duality multiplets in the context of duality conjectures that explain how the
different existing superstring theories are perturbation expansions of a single theory, called $M-$theory. These $D-$branes can be
understood as topological defects in the worldvolumes of higher dimensional unstable systems of branes, such as brane-antibrane pairs.
This unstability shows itself in the presence of tachyons, which can not removed by the usual Gliozzi-Scherk-Olive (or GSO) projection.
Indeed, it has been suggested that these tachyon fields could be interpreted as Higgs fields, and that they could be incorporated,
again, within the components of certain superconnections, constructed from the supervector bundles determined by the Chan-Paton
gauge bundles of the brane-antibrane system. The Chern character of these bundles, can be used to relate their cohomology to the
$K-$theory group, allowing a topological classification of $D-$brane charges, see \cite{Wit}.
We strongly recommend \cite{OS,Sza} for a detailed account of further developments based on these ideas.

The $D-$branes mentioned above are associated to $(p+1)-$submanifolds of a $10-$dimensional spacetime with $0\leq p\leq 9$.
The question arises of how to translate these ideas to the general case of superbranes moving on a supermanifold, as Quillen
superconnections are defined over ordinary (commutative) manifolds. Rather than trying to extend the notion of Quillen superconnection
to this setting, we show here that they can be put on correspondence with Koszul connections on a certain class of supermanifolds. This class
can be thought as those split supermanifolds endowed with an odd-degree differential (a homological vector field), so they can be seen also as
Gerstenhaber algebras associated to Lie algebroids (see \cite{Vain}).
But, for the sake of simplicity, in this paper we will restrict ourselves to the case of the well-known Cartan-Koszul supermanifold
$(M,\Omega (M))$, whose sheaf of superfunctions is simply the sheaf of (ordinary) differential forms on a manifold, so the homological vector field is simply the exterior differential  (notice that these
split supermanifolds are examples of differential graded manifolds \cite{Vor,Roy}, and they carry an additional $\mathbb{Z}-$grading,
known in Physics as the \emph{ghost number} grading). We hope that, in this way, the geometric meaning of the constructions presented here
will be clearer.

The contents of the paper are as follows. Section \ref{sec2} serves the dual purpose of setting up the main notations
and making the paper self-contained (perhaps at the risk of irritating the expert reader).
Sections \ref{sec3}, \ref{graded-con}, \ref{carkos} review the main properties of Quillen superconnections and Koszul graded connections on supermanifolds,
paying particular attention to the construction of \emph{graded vector bundles} (over a supermanifold) and their relation to
\emph{supervector bundles} (over the base of the supermanifold---an ordinary manifold). The
remaining sections develop a parametrization of Quillen superconnections by means of graded connections on
$(M,\Omega (M))$, and to study the relationship between their associated Chern classes.

\section{Differential operators on graded modules}\label{sec2}

Throughout this paper, $\K$ will denote either $\R$ or $\C$.
Let $G$ be a commutative group endowed with a morphism $\epsilon :G\va \Z/2\Z$, and let $A=\oplus_{g\in G}A^g$ be a $G-$graded
commutative $\K -$algebra with identity element $\mathbf{1}_A$. Elements $a\in A^g$ will be called homogeneous of degree $|a|=g$.
The commutativity in $A$ means $ab=(-1)^{\epsilon (|a|)\epsilon (|b|)}ba$, for all homogeneous $a,b\in A$.
But in this paper $G$ will be either
$\{ 0\}$, $\Z$ or $\Z_2$, so we will simplify this notation by writing $ab=(-1)^{ab}ba$. Note that any $\Z-$grading induces a
corresponding $\Z_2-$grading by collecting even and odd elements. In this case, $A=A^0 \oplus A^1$ is called a superalgebra (and
the $\Z-$modules over it are called supermodules).

Suppose $A$ is simply a $G-$graded $\K-$algebra (not necessarily commutative). It can be viewed as a Lie superalgebra when
endowed with the supercommutator:
\begin{equation}\label{scommutator}
[a,b]=ab-(-1)^{ab}ba,
\end{equation}
for all homogeneous $a,b\in A$, extended to all of $A$ by $\K-$bilinearity. Notice that a $G-$graded algebra $A$ is commutative
precisely when the supercommutator vanishes identically. The properties of the supercommutator \eqref{scommutator} defining the
Lie superalgebra structure are, besides its $\K-$bilinearity (for homogeneous $a,b,c\in A$):
\begin{enumerate}[(a)]\itemsep0.5em
\item Graded skew-symmetry: $[a,b]=-(-1)^{ab}[b,a]$,
\item Graded Jacobi identity: $[a,[b,c]]=[[a,b],c]+(-1)^{ab}[b,[a,c]]$.
\end{enumerate}

If $N,P$ are finitely-generated $A-$modules, the set $\hk (N,P)$ naturally inherits a $G-$graded $A-$module structure. Thus, given a
$\phi\in\hk (N,P)$ and an element $v\in N$, we will have $|\phi (v)|=|\phi|+|v|$. In particular, for each $k\in\N$ we have $G-$graded $A-$modules
$\oo^k (N)\subset\hk (N\times \stackrel{k)}{\ldots}\times N,A)$, and $\oo^k (N;P)\subset\hk (N\times \stackrel{k)}{\ldots}\times N,P)$, whose elements are
the alternate ones. Defining $\oo^0 (N):=A$, $\oo^0 (N;P);=P$, and $\oo^{-k}(N):=\{0\}=:\oo^{-k}(N;P)$ for $k\in\N$ or $-k>\mathrm{rank} N$, we will write
$$
\oo (N)=\bigoplus_{i\in\Z}\oo^i (N)\mbox{ and }\oo (N;P)=\bigoplus_{j\in\Z}\oo^j (N;P).
$$
Thus, $\oo (N;P)$ not only has a $G-$grading, but also a $\Z-$grading. The \emph{total degree} of an element $\phi\in\oo^k (N;P)$ is the pair $(k,|\phi|)\in\Z\times G$. When necessary, we will indicate this total degree by the notation $\phi\in\oo^{(k,|\phi|)} (N;P)$.
\begin{example}\label{ex1}
Let $\pi:E\va M$ be a smooth real vector bundle over the smooth manifold $M$, whose dual bundle will be denoted $E^*$.
The algebra of smooth functions $\cinfm$ is commutative in the usual sense ($G=\{0\}$), the space of smooth sections $\Upgamma E$ is a $\cinfm-$module,
and the exterior bundle $\se E)$ is both a $\cinfm-$module and a $\Z-$graded commutative algebra, $\se E)=\oplus_{i\in\Z}\se^i E)$ (with $\se^0 E)=\cinfm$ and
$\se^{-i} E)=\{0\}$ if $i\in\N$).\\
Taking as the algebra $A=\cinfm$, we can consider the $\cinfm-$module $\mathcal{X}(M) =\der(\cinfm)$ of vector fields on $M$. In this context, the notation
$\oo^k (M;F)$ means that $F$ is another smooth vector bundle over $M$, so its elements (the $F-$valued differential forms on the manifold $M$) are of
the form $\phi:\mathcal{X}(M)\times \cdots\times\mathcal{X}(M)\va F$, $\cinfm-$linear and alternate.\\
If we consider $A=\se E)$, an important $\Z-$graded $\se E)-$module is the one of graded endomorphisms, $\er (\se E))=\oplus_{i\in\Z}\er^i (\se E))$, as well as the
$\se E)-$modules that can be formed by taking tensor products with other vector bundles over $M$.
\end{example}
\begin{remark}
Note that any $a\in A^g$ can be viewed as an element $a\in \ek^g (P)$ for any $G-$graded $A-$module $P$, acting as $a(v)=a\cdot v$, $v\in P$.
\end{remark}

To define differential operators, we will need some facts about the algebraic structure of $\hk (N,P)$, where $N,P$ are $G-$graded $A-$modules,
with $A=\oplus_{g\in G}A^g$.
First, note that if $D\in \hk (N,P)$, and $a\in A$, then we can define $[D,a]\in \hk (N,P)$ by putting, for all $v\in N$:
\begin{equation}\label{eq1}
[D,a](v):=D(av)-(-1)^{aD}aD(v)\in P.
\end{equation}
It is immediate to check that this bracket satisfies:
\begin{equation}\label{eq1b}
[[D,D'],a]=[D,[D',a]]-(-1)^{DD'}[D',[D,a]],
\end{equation}
for any pair $D,D'\in \hk (N,P)$, and also:
\begin{equation}\label{deriv}
[D,ab]=[D,a]b+(-1)^{aD}a[D,b],
\end{equation}
for any $a,b\in A$.
\begin{definition}\label{def1}
Let $A$ be a $G-$graded commutative $\K-$algebra, and let $N,P$ be $G-$graded $A-$modules. A morphism $\Updelta\in\hk (N,P)$ is called a ($G-$graded)
differential operator of order $\leq k$ (with $k\in \N\cup\{0\}$) if, for all $a_0,a_1,\ldots,a_k\in A$, the following holds:
$$
[[\cdots [[\Updelta,a_0],a_1],\cdots ],a_k]=0.
$$
\end{definition}
The set of all such operators will be denoted $\mathcal{D}_k (N;P)$ (or $\mathcal{D}_k (N)$ in the case $P=N$). It is clear that $\mathcal{D}_k (N;P)$ is an
$A-$module, as for any $a,b\in A$, $\Updelta\in\mathcal{D}_k (N;P)$:
$$
[a\Updelta,b]=a\Updelta (b)-(-1)^{(a+\Updelta)b}ba\Updelta=a\Updelta(b)-(-1)^{(a+\Updelta)b+ab}ab\Updelta=a[\Updelta,b].
$$
Thus, we have a $\Z-$graded $A-$module $\mathcal{D} (N;P):=\oplus_{k\in\Z}\mathcal{D}_k (N;P)$, with the usual conventions for $k<0$. To account for the
$G-$grading, let us define
$$
\mathcal{D}^p_k (N;P):=\mathcal{D}_k (N;P)\bigcap \hk^p (N;P).
$$
\begin{example}\label{ex2}
Let $E$ be a real vector bundle as in Example \ref{ex1}. A linear (Koszul) connection $\nabla$ on $E$ is an $\R-$bilinear mapping $\nabla:
\mathcal{X}(M)\times\Upgamma E\va\Upgamma E$
(with $\nabla(X,\sigma)$ denoted $\nabla_X\sigma$) such that $\nabla_X(f\sigma)=\df f(X)\sigma +f\nabla_X\sigma$, for all $f\in\cinfm$. This can
be written as
$[\nabla,f](X)(\sigma)=\df f(X)\cdot\sigma$, or $[\nabla,f]=\df f$ as an $E-$valued $1-$form. The mapping $\nabla$ can
be extended to an operator
$\df^\nabla:\oo^p(M;E)\va\oo^{p+1}(M;E)$ by defining (for $\phi\in\oo^p (M;E)$, $X_0,\ldots,X_p\in\mathcal{X}(M)$):
$$
(\df^\nabla\phi)(X_0,\ldots,X_p)=\sum_{i=0}^p s_i(\nabla_{X_i}\phi)(X_0,\stackrel{\hat{i}}{\ldots},X_p)+\sum_{i<j}s_{i+j}\phi([X_i,X_j],X_0,
\stackrel{\hat{i}\hat{j}}{\ldots},X_p),
$$
where, as usual, a hat over an index denotes that the corresponding term is omitted, and we have used the shorthand $s_j=(-1)^j$. Now, from
$[\nabla,f]=\df f$ and the property \eqref{deriv} of the commutator \eqref{eq1}, we get that $\df^\nabla$ must be a first-order differential
operator on the $\om-$module $\fvv$: $[[\df^\nabla,\al],\beta]=0$, for all $\al,\beta\in\om$
(just write $\al=f_I\df g^I$ for some multi-index $I=(i_1,\ldots,i_{|\al|})$). Thus, we can characterize linear connections $\nabla$ on $E$ as
the restrictions to $\oo^1(M;E)$ of first-order differential operators on $\fvv$ of degree $1$, $D\in\mathcal{D}^1_1 (\fvv)$, such that
$[D,f]=\df f$, for all $f\in\cinfm$. So, $D=\df^\nabla$.
\end{example}
A case of particular interest is obtained by taking $N=A$ in Definition \ref{def1}.
\begin{definition}\label{def2}
Let $A$ be a $G-$graded commutative $\K-$algebra, $P$ a $G-$graded $A-$module. A morphism $D\in\hk (A;P)$ is called a ($G-$graded) $P-$derivation
on $A$, of $G-$degree $k$, if for all $a,b\in A$:
$$
D(ab)=D(a)b+(-1)^{ak}aD(b)\in P.
$$
If, moreover, $D$ satisfies $[D,a]=0$, for all $a\in A$, we say that $D$ is an \emph{algebraic derivation}.
\end{definition}
The set of all $P-$derivations on $A$ is denoted $\der (A;P)=\oplus_{k\in G}\der^k (A;P)$. If $\Updelta\in\mathcal{D}_1 (A;P)$, then $\Updelta-\Updelta(\mathbf{1}_A)\in\der (A;P)$,
as it is readily checked. Thus, we have the decomposition of the space of first-order differential operators in this case:
$\mathcal{D}_1 (A;P)=\der (A;P)\oplus P$.\\
When $P=A$, we simply write $\der A$, and refer to any $\delta\in\der A$ as a ($G-$graded) derivation of $A$. In particular,
$\mathcal{D}_1 (A)=\der A\oplus A$.
\begin{remark}\label{rem2}
A case of particular interest appears when $Q$ is any $\K-$module and we form the left $A-$module $A\otimes Q$.
If $\delta\in\der(A\otimes Q)$, and $a\in A$, in general we have $[\delta ,a]\in\mathrm{End}(A\otimes Q)$. But if
it happens that, for each $a\in A$, this morphism is of the form `multiplication by an element of $A$', and we denote this element by
$\overline{\delta}a$ (as it depends on both, $\delta$ and $a$), then $\overline{\delta}:A\to A$ is a derivation, as
a consequence of \eqref{deriv}:
$$
\overline{\delta}(ab)=[\delta,ab]=[\delta,a]b+(-1)^{a\delta}a[\delta,b]=\overline{\delta}(a)b+(-1)^{a\delta}a\overline{\delta}(b) .
$$
In this case, we say that $\overline{\delta}$ is the derivation in $A$ induced by $\delta$.
\end{remark}
\begin{example}\label{ex3}
Let $E$ be a smooth real vector bundle over the manifold $M$.
\begin{enumerate}[(a)]
\item\label{itb} Given a linear connection $\nabla$ on $E$, its extension to a differential operator on $\fvv$ (as in Example
\ref{ex2}) is done in such a way that it acts as a derivation of degree $1$.
\item\label{ita} Given a $P\in\oo^k (M;\ek (E))$, we can define a derivation of degree $k$ (which will be denoted also as $P$) by:
\begin{align*}
&(P\sigma)(X_1,\ldots,X_{p+k})= \\
&\frac{1}{p!k!}\sum_{\pi\in S_{p+k}}\mathrm{sg}(\pi)P(X_{\pi(1)},\ldots,X_{\pi(k)})
(\phi (X_{\pi(k+1)},\ldots,X_{\pi(p+k)})),
\end{align*}
for any vector fields $X_1,\ldots,X_{p+k}$, and $\sigma\in\oo^p (M;E)$ (here $\mathrm{sg}(\pi)$ denotes the signature
of the permutation $\pi$). In particular, if $P\in\oo^1 (M;E)$, then
$P\sigma (X)=P(X)(\sigma)\in\Upgamma (E)$.
These are examples of algebraic derivations.
\item\label{itc} Again, let $\nabla$ be a linear connection on $E$. Given any $a\otimes X\in\se^a E\otimes TM)$, we have an
endomorphism $\nabla_{a\otimes X}\in\ek^a (\se E))$ defined by $\nabla_{a\otimes X}(b):=a\wedge \nabla_X (b)$. If
$K\in\se^a E\otimes TM)$ is an arbitrary element, we define $\nabla_K$ by its linear extension. This is a derivation
on $\se E)$ of $\Z-$degree $a$.
\item\label{itd} Given any $L\in\se E\otimes E^*)$, it can be expressed as a sum of decomposable elements, each of them of the
form $b\otimes\beta$, for some homogeneous section $b\in\se^b E)$, and $\beta\in\Upgamma (E^*)$. For these, we define
$i_{b\otimes\beta}:\se E)\va \se E)$ by putting $i_{b\otimes\beta}(a)=b\wedge i_{\beta}(a)$, and then extend by
linearity. In this way, we get an element of $\der^{b-1}\se E)$.
\end{enumerate}
\end{example}
\begin{remark}\label{michor}
An important consequence can be drawn from the preceding examples. Notice that once
a linear connection $\nabla_0$ on $E$ has been chosen, any derivation $D\in\der^1\fvv$ such that its induced derivation
on $\om$ is $\overline{D}=\df$, decomposes as:
$$
D=\df^{\nabla_0}+P,
$$
for some $P\in\oo^1(M;\ek (E))$ (see 3.8 in \cite{Mic}). In particular, $\df^{\nabla}=\df^{\nabla_0}+P$.
\end{remark}

\section{Quillen superconnections}\label{sec3}

In the examples of the preceding section, which were intended to illustrate the classical notions of
differential geometry from the point of view of differential operators on modules, we chose a smooth
vector bundle $E\va M$, whose sections are a module over the commutative algebra $\cinfm$. In this way,
we obtained a $\Z-$graded algebra $\se E)$, but forgot the $G-$grading (which appears in Definitions
\ref{def1} and \ref{def2}) or, more precisely, took it to be trivial. Quillen's definition of
superconnection, takes full advantage of the $G\times\Z-$bigrading by considering a $\K -$supervector bundle
$E=E_0\oplus E_1$ over $M$. But, aside from this fact, the definition is formally the same as the classical one
described in Example \ref{ex2}.\\
In this section we give only the basic facts about Quillen superconnections that will be required to prove our
main result. A detailed study of this notion can be found in the original paper \cite{Qui} and in
\cite{BGV,Sardanas}.

\begin{definition}\label{def3}
A Quillen superconnection (or simply superconnection) on a $\K -$su\-per\-vector bundle over $M$, $E = E_0\oplus E_1$ is a derivation
on $\Omega(M;E)$ (thus, a first-order differential operator), of odd \emph{total} degree, which induces the exterior differential
$\df$ on $\om$, that is:
\begin{equation}\label{def-Quillen}
D(\alpha\wedge\sigma)= {\rm d}\alpha\wedge\sigma +(-1)^{\alpha} \alpha\wedge D\sigma ,
\end{equation}
for $\alpha\in\Omega(M)$ and $\sigma\in\Omega(M;E)$.
\end{definition}
\begin{remark}\label{rem-qui}
This definition is the one given in \cite{Qui}. But note that condition (\ref{def-Quillen}) is equivalent to:
$$
[D,\alpha](\phi) = {\rm d}\alpha\wedge\phi ,
$$
in terms of the graded commutator \eqref{eq1}.
\end{remark}

As in the classical case, the difference between two superconnections is an $\mathrm{End}(E)-$valued form on $M$.
Indeed, it is an algebraic operator in the sense of Definition \ref{def2} as, if
$D$ and $D'$ are superconnections, and $\al\in\om$, then $[D-D',\al ]=\df\al -\df\al =0$. The following result,
proved in \cite{BGV}, uses the identification mentioned in \eqref{ita} of Example \ref{ex3}.
\begin{proposition}
 The space of superconnections on $E$ is an affine space modeled on the vector space:
 $$
\displaystyle \Omega^{-}(M;\ek(E)) := \bigoplus_{i+k\mbox{ odd}} \Omega^{(i,k)}(M;\ek(E)).
 $$
\end{proposition}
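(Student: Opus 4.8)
The plan is to prove two things: first, that the difference $D - D'$ of any two superconnections lands in $\Omega^{-}(M;\ek(E))$, and second, that adding any element of $\Omega^{-}(M;\ek(E))$ to a given superconnection yields another superconnection. Together these establish that the space of superconnections is an affine space modeled on the stated vector space. The paper has already done most of the work for the first direction in the paragraph preceding the statement: if $D,D'$ are superconnections, then for all $\alpha\in\om$ we have $[D-D',\alpha]=\df\alpha-\df\alpha=0$, so $D-D'$ is an algebraic derivation in the sense of Definition \ref{def2}. By the identification in item \eqref{ita} of Example \ref{ex3}, such algebraic derivations on $\fvv$ are precisely the $\ek(E)$-valued forms acting by the contraction-type formula given there. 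So $D-D'\in\oo(M;\ek(E))$.

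Next I would pin down the \emph{degree}. A superconnection has odd total degree $(k,|D|)\in\Z\times G$ with $\epsilon(|D|)+k\equiv 1\pmod 2$; since both $D$ and $D'$ are derivations of the same odd total degree, their difference is a homogeneous operator of that same odd total degree (or a sum of such, once we drop the homogeneity assumption and allow superconnections differing in their decomposition — but the affine model is built from the homogeneous odd pieces). Concretely, writing the bigrading $(i,k)$ with $i$ the $\Z$-form-degree and $k$ the $G$-degree of the $\ek(E)$-part, the total-degree-oddness condition is exactly $i+k$ odd. Hence $D-D'\in\bigoplus_{i+k\ \mathrm{odd}}\oo^{(i,k)}(M;\ek(E))=\Omega^{-}(M;\ek(E))$, which is the containment we need.

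For the converse direction I would take a fixed superconnection $D$ and an arbitrary $P\in\Omega^{-}(M;\ek(E))$ and check that $D+P$ is again a superconnection. Since $P$ is an algebraic derivation, $[P,\alpha]=0$ for all $\alpha\in\om$, so $[D+P,\alpha]=[D,\alpha]=\df\alpha$ (in the sense of Remark \ref{rem-qui}), which shows $D+P$ still induces $\df$ on $\om$. That $D+P$ is a derivation follows because the sum of two derivations is a derivation, and its total degree is odd because both summands have odd total degree — here the oddness of $P$ is precisely the condition $i+k$ odd defining $\Omega^{-}$. Finally I would verify that the affine action is free and transitive: transitivity is the first direction, and freeness is immediate since $D+P=D$ forces $P=0$.

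The step I expect to require the most care is making the degree bookkeeping precise, namely translating the single word ``odd'' in Definition \ref{def3} (oddness of the \emph{total} degree $(k,|D|)$) into the indexing condition ``$i+k$ odd'' on the bigraded components of $\oo(M;\ek(E))$. One must be careful that the $\Z$-grading of an $\ek(E)$-valued form by its form-degree $i$ and the $G$-grading $k$ of its endomorphism values combine additively under $\epsilon$ to give the total parity, and that the action of such a form on $\fvv$ via Example \ref{ex3}\eqref{ita} genuinely carries total degree $(i,k)$. Everything else reduces to the algebraic identities \eqref{eq1b} and \eqref{deriv} already established, together with the identification of algebraic derivations with $\ek(E)$-valued forms, so no new hard computation is needed beyond this grading analysis.
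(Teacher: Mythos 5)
Your proposal is correct and follows essentially the same route the paper intends: the paper itself delegates the proof to \cite{BGV} but sketches exactly your first direction in the preceding paragraph ($[D-D',\al]=\df\al-\df\al=0$, so the difference is an algebraic derivation, identified with an $\ek(E)$-valued form via Example \ref{ex3}\eqref{ita}), and your converse direction, parity bookkeeping translating oddness of the total degree into the condition ``$i+k$ odd,'' and the free-and-transitive check are precisely the routine details that citation covers.
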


Noticing that Definition \ref{def3} involves the \emph{total} degree, we see that the space modeling Quillen superconnections
is built from sections of the following vector bundles:
\begin{equation}\label{modeled-Quillen}
\begin{array}{lcr}
\displaystyle\bigoplus_{k\ge 0}\se^{2k+1}T^\ast M\otimes\ek^0(E)), \\[3mm]
\displaystyle\bigoplus_{k\ge 0}\se^{2k}T^\ast M\otimes\ek^1(E)),
\end{array}
\end{equation}
where $\ek^0(E)=(E^*_0\otimes E_0)\oplus (E^*_1 \otimes E_1)$ and $\ek^1(E)=(E^*_0\otimes E_1)\oplus (E^*_1 \otimes E_0)$.\\
This fact, combined with Remark \ref{michor}, shows that when the vector bundle $E=M\times V$ is trivial, any superconnection
can be written as $D=\df +P$, with $P\in\Omega^{-}(M;\ek(V))$. This is the case for most applications in Physics, see
\cite{MHY,KL,NS1,NS2,Tahi,Yasui}.
\begin{example}\label{example-quillen}
Consider any Koszul connection $\nabla$ on the vector bundle $E\to M$, such that it preserves the $\Z_2-$grading of
$E=E_0\oplus E_1$. Any superconnection $D\in\der\fvv$ of total degree $1$, can be written, following Remark \ref{michor},
as $D=\df^{\nabla}+P$, where $P=A+L\in\oo^0(M;\mathrm{End}^1(E))\oplus \oo^1(M;\mathrm{End}^0(E))$.
That means that $A$ is just an element of $\Upgamma (\mathrm{End}^1(E))$, and, for any vector field
$X\in\mathcal{X}(M)$, we have
$L(X)\in \Upgamma (\mathrm{End}^0(E))$. Thus, Quillen superconnections of total degree $1$, can be parameterized by
four tensor fields: $T_1\in\Upgamma (T^*M\otimes E^*_0\otimes E_0)$ and $T_2\in\Upgamma (T^*M\otimes E^*_1\otimes E_1)$
corresponding to $L$, and $T_3\in\Upgamma (E^*_1\otimes E_0)$, $T_4\in\Upgamma (E^*_0\otimes E_1)$ corresponding
to $A$. This is the setting presented in \cite{Qui}.
\end{example}
%\begin{remark}
%If $D$ is a Quillen superconnection, then its $(0,1)-$part, $D^{(0,1)}$, is also a Quillen superconnection.
%\end{remark}
\begin{definition}
The curvature of a superconnection $D$ is the element $R^D\in \ek\fvv$ given by:
$$
R^D=D^2=\frac{1}{2}[D,D].
$$
\end{definition}
The curvature $R^D$ is thus a second-order differential operator on $\fvv$, of even total degree. Indeed, as for any $\al\in\om$
(applying \eqref{eq1b}):
$$
\frac{1}{2}[[D,D],\al ]=[D,[D,\al ]]=[D,\df \al ]=\df^2\al =0,
$$
we have that $R^D\in\Omega^+ (M;\mathrm{E})$ is an algebraic derivation in the sense of Definition \ref{def2}.

\section{Connections on supermanifolds}\label{graded-con}
The basic idea underlying the definition of a
supermanifold is the replacement of the commutative sheaf of differentiable
functions $\cinfm$ of a manifold $M$, by another one in which
we can accommodate objects with a $\mathbb{Z}_{2}-$grading. The axiomatic for such
spaces was given by M. Rothstein in \cite{Rot}, here we follow the slightly modified version of \cite{BBHP}.
\begin{definition}
\label{superman}A supermanifold of dimension $(m|n)$ and
basis $(M,\cinfm)$ is given by a usual differential manifold $M$, with
dimension $m$, and a sheaf $\mathcal{A}$ of $\Z_2-$graded commutative $\K-$algebras
(called the structural sheaf) such that:
\begin{enumerate}[(1)]
\item There is an exact sequence of sheaves%
\begin{equation}
0\rightarrow\mathcal{N}\rightarrow\mathcal{A}\overset{\sim}{\rightarrow}\cinfm\rightarrow0, \label{eq1_5}%
\end{equation}
where $\mathcal{N}$ is the sheaf of nilpotents of $\mathcal{A}$ and $\sim$ is
a surjective morphism of sheaves of graded commutative $\K-$algebras (called the
structural morphism).
\item \label{Cond2}$\mathcal{N}/\mathcal{N}^{2}$ is a locally free module with
rank $n$ over $\cinfm=\mathcal{A}/\mathcal{N}$, and $\mathcal{A}$ is
locally isomorphic, as a sheaf of $\Z_2-$graded $\K-$commutative algebras, to the
exterior bundle $\Uplambda_{\cinfm}(\mathcal{N}/\mathcal{N}^{2})$.
\end{enumerate}
\end{definition}
By definition, ${\mathcal A}$ is locally isomorphic to the sheaf of sections of the exterior algebra bundle
$\Uplambda F\to M$, for some vector bundle $F\to M$ (canonically attached to $\mathcal{A}$). When  $\mathcal{A}$ is
globally isomorphic to $\se F)$, the supermanifold is said to be split. In particular, any smooth real
supermanifold
is split, but this is not true for arbitrary holomorphic supermanifolds. Indeed, Koszul (see \cite{Kz2}) has
proven that the existence of splittings of a given supermanifold is related to the existence of certain graded linear
connections on it (see Definition \ref{gra-con} below). We will assume for the
remainder of this paper that supermanifolds $(M,\mathcal{A})$ are split. Indeed, the complex structure will play
no role from now on, so we will assume $\mathbb{K}=\mathbb{R}$ for simplicity (and drop the corresponding
subindexes), unless otherwise explicitly stated.

Supervector fields are defined as (graded) derivations of the structural sheaf $\mathcal{A}=\se E)$. The following result, involving the derivations \eqref{itc} and \eqref{itd} of Example \ref{ex3}, is obtained by adapting
the classical proof of the decomposition of a derivation on $\om$ (see \cite{Kos,MM}), and gives us the
structure of supervector fields.
\begin{theorem}\label{thm1}
Let $\pi:F\va M$ be a smooth vector bundle over the differential manifold $M$. Let $\delta\in\der\se F)$, and let
$\nabla$ be a linear connection on $F^*$. Then, there exist unique tensor fields $K\in\se F\otimes TM)$ and
$L\in\se F\otimes F^*)$, such that
$$
\delta=\nabla_K +i_L.
$$
\end{theorem}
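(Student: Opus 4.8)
The plan is to adapt the classical decomposition of a derivation of $\om$ (\cite{Kos,MM,Mic}), replacing the canonical differentiation of functions by the covariant derivative furnished by $\nabla$. Recall from Example \ref{ex3} that $\nabla_K$ has $\Z$-degree $a$ when $K\in\se^a F\otimes TM)$, while $i_L$ has $\Z$-degree $b-1$ when $L\in\se^b F\otimes F^*)$; since every derivation splits canonically into homogeneous components, it suffices to establish the statement for a homogeneous $\delta\in\der^k\se F)$, the general case following componentwise together with uniqueness degree by degree. The pivotal principle is that a graded derivation of $\se F)$ is determined by its values on the algebra generators $\se^0 F)=\cinfm$ and $\se^1 F)=\Upgamma (F)$: by the graded Leibniz rule the locus on which two derivations of equal degree agree is a subalgebra, so if it contains these generators it is all of $\se F)$.

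First I would extract the ``vector-field'' part. Restricting $\delta$ to $\cinfm$ yields an $\R$-linear map $\cinfm\va\se^k F)$ obeying $\delta(fg)=\delta(f)g+f\delta(g)$, i.e.\ a $\se^k F)$-valued derivation of $\cinfm$. Because $\mathcal{X}(M)=\der\cinfm$ and $F$ is a vector bundle, these are identified with $\se^k F)\otimes_{\cinfm}\mathcal{X}(M)=\se^k F\otimes TM)$, yielding the tensor field $K$. The connection $\nabla$ on $F^*$ induces by duality a connection on $F$, and thereby on its exterior powers, and this is the only place it intervenes, through $\nabla_K$; since on $\cinfm=\se^0 F)$ the induced connection reduces to $\nabla_X f=Xf$, the operator $\nabla_K$ coincides with $\delta$ on $\cinfm$ whatever $\nabla$ is, so $K$ is fixed by $\delta|_{\cinfm}$ alone.

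Next I would extract the algebraic part. Put $\delta':=\delta-\nabla_K$, which vanishes on $\cinfm$ by construction; hence $\delta'(f\omega)=f\delta'(\omega)$, so $\delta'$ is $\cinfm$-linear, an algebraic derivation in the sense of Definition \ref{def2}. Its restriction to $\Upgamma (F)$ is a $\cinfm$-linear map $\Upgamma (F)\va\se^{k+1} F)$, that is, an element $L\in\mathrm{Hom}_{\cinfm}(\Upgamma(F),\se^{k+1}F))=\se^{k+1}F\otimes F^*)$. Testing on a decomposable $L=b\otimes\beta$ shows that $i_L$ and $\delta'$ send every $a\in\Upgamma (F)$ to the same $\beta(a)\,b$, and, being degree-$k$ derivations that agree on generators, they coincide: $\delta'=i_L$. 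Thus $\delta=\nabla_K+i_L$.

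Uniqueness follows by running the argument backwards: every $i_L$ annihilates $\cinfm$, so restricting an identity $\nabla_K+i_L=\nabla_{K'}+i_{L'}$ to $\cinfm$ gives $\nabla_K=\nabla_{K'}$ there and hence $K=K'$, after which $i_L=i_{L'}$ on $\Upgamma (F)$ forces $L=L'$. I expect the only genuinely delicate point to be the two module identifications $\der(\cinfm,\se^k F))\cong\se^k F\otimes TM)$ and $\mathrm{Hom}_{\cinfm}(\Upgamma(F),\se^{k+1}F))\cong\se^{k+1}F\otimes F^*)$, which rely on the local triviality of $F$ and a localization/partition-of-unity argument; once the ``agreement on generators'' principle and these identifications are secured, the remaining sign and degree bookkeeping is routine.
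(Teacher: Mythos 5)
Your proof is correct and follows essentially the same route the paper itself indicates (the paper gives no written proof, saying only that Theorem \ref{thm1} ``is obtained by adapting the classical proof of the decomposition of a derivation on $\om$'' from \cite{Kos,MM}): extract $K$ from $\delta|_{\cinfm}$ via the identification of $\se^k F)$-valued derivations of $\cinfm$ with $\se^k F\otimes TM)$, subtract $\nabla_K$, and identify the remaining algebraic derivation with $i_L$ through its restriction to the generators $\Upgamma(F)$, with uniqueness read off on $\cinfm$ and $\Upgamma(F)$. Your handling of the degree bookkeeping, the dualization of $\nabla$ from $F^*$ to $F$, and the ``agreement on generators'' principle matches the standard argument, so there is nothing to correct.
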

\begin{remark}\label{rem3}
By combining this theorem with Remark \ref{rem2}, we get a corresponding decomposition of the space of
first-order superdifferential operators $\mathcal{D}_1 (\se F))$.
\end{remark}
\begin{example}\label{lieder}
If we consider the cotangent bundle of a manifold, $T^*M\to M$, and a linear connection on $TM$, $\nabla$,
it is immediate to check that the Lie derivative with respect to a vector field $X\in\mathcal{X}(M)$,
$\mathscr{L}_X\in\der\se T^*M)$, admits the decomposition:
$$
\mathscr{L}_X =\nabla_X +i_{\nabla X +T(X,\cdot )},
$$
where $T$ is the torsion of $\nabla$.
\end{example}

Differential superforms can be now defined on a supermanifold $(M,\mathcal{A})$ by (graded) duality;
note that they have a natural structure
of sheaves of \emph{right} $\mathcal{A}-$modules, denoted by $\oo^k(M,\mathcal{A})$, $k\in\Z$ (with
$\oo^{-p}(M,\mathcal{A})=\{ 0\}$ if $p>0$). Thus, for instance, the elements of $\oo^1(M,\mathcal{A})$ are
morphisms of sheaves of $\mathcal{A}-$modules $\der (\mathcal{A})\to\mathcal{A}$. Other notions from the usual
calculus on manifolds admit straightforward generalizations (see \cite{Kos} for details).
\begin{example}\label{lad}
In particular, we have
the graded differential, which we will denote $\bm{\df}$, with bidegree $(1,0)$. If
$\langle \cdot ,\cdot \rangle$ denotes the pairing between supervector fields and superforms, it satisfies:
$$
\langle \delta;\bm{\df}\al \rangle =\delta(\al ),
$$
for any $0-$superform $\al \in \mathcal{A}$, and $\delta\in\der (\mathcal{A})$.
\end{example}

\begin{definition}
A graded vector bundle over $(M,\mathcal{A})$, is a locally free sheaf ${\mathcal L}$ of ${\mathbb Z}_2$-graded
$\mathcal{A}$-modules over $M$, ${\mathcal L}= {\mathcal L}_0\oplus {\mathcal L}_1$.
\end{definition}
\begin{example}\label{stangent}
Given a supermanifold $(M,\mathcal{A})$, the sheaf of derivations $\der(\mathcal{A})$ is locally free and
finitely generated (because of condition \eqref{Cond2} in Definition \ref{superman}), and gives
rise to a graded vector bundle over $(M,\mathcal{A})$, which is called the
supertangent bundle.
\end{example}
The following result, which will be essential in what follows, explains the structure of such bundles. For
its proof, see \cite{JA}.
\begin{theorem}\label{superbundles}
Let  $\mathcal{L}$ be a graded vector bundle over $(M, \mathcal{A})$, then there is a supervector bundle over $M$,
$E= E_0\oplus E_1$, such that ${\mathcal L}$ is isomorphic (as ${\mathcal A}$-supermodules) to
${\mathcal A}\otimes \Gamma(E)$.
\end{theorem}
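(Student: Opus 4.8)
The plan is to establish the isomorphism $\mathcal{L}\cong\mathcal{A}\otimes\Gamma(E)$ by exploiting the split structure $\mathcal{A}=\Upgamma(\Uplambda F)$ and reducing the graded module $\mathcal{L}$ to data on the base manifold $M$. First I would work locally: since $\mathcal{L}$ is a locally free sheaf of $\mathbb{Z}_2$-graded $\mathcal{A}$-modules, over a trivializing open set $U$ it is isomorphic to $\mathcal{A}|_U^{\,p|q}$ for some ranks $(p|q)$. The key idea is to extract the \emph{reduced bundle} over the ordinary manifold $M$: applying the structural morphism $\sim:\mathcal{A}\to\cinfm$ from the exact sequence \eqref{eq1_5}, I would tensor $\mathcal{L}$ down to $\cinfm\otimes_\mathcal{A}\mathcal{L}=\mathcal{L}/\mathcal{N}\mathcal{L}$, which is a locally free $\cinfm$-module and hence the sheaf of sections of an ordinary $\mathbb{Z}_2$-graded vector bundle $E=E_0\oplus E_1$ over $M$, with $\mathrm{rank}(E_0)=p$, $\mathrm{rank}(E_1)=q$.

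The heart of the proof is to promote this local/reduced data to a \emph{global} $\mathcal{A}$-module isomorphism $\mathcal{A}\otimes\Gamma(E)\xrightarrow{\sim}\mathcal{L}$. The natural candidate map sends $a\otimes\sigma$ to $a\cdot\tilde\sigma$, where $\tilde\sigma$ is a chosen lift of $\sigma\in\Gamma(E)=\mathcal{L}/\mathcal{N}\mathcal{L}$ back into $\mathcal{L}$. To build such lifts globally I would choose a splitting: because we assume throughout that $(M,\mathcal{A})$ is split, with $\mathcal{A}\cong\Upgamma(\Uplambda F)$ carrying the $\mathbb{N}$-grading $\mathcal{A}=\bigoplus_{i\ge 0}\Upgamma(\Uplambda^i F)$, the nilpotent filtration $\mathcal{N}\supset\mathcal{N}^2\supset\cdots$ splits as well, and I can similarly filter $\mathcal{L}$ by $\mathcal{N}^k\mathcal{L}$. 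The plan is to argue that the associated graded module $\mathrm{gr}(\mathcal{L})=\bigoplus_k \mathcal{N}^k\mathcal{L}/\mathcal{N}^{k+1}\mathcal{L}$ is, as a sheaf of $\mathrm{gr}(\mathcal{A})=\Upgamma(\Uplambda F)$-modules, freely generated by $E_0\oplus E_1$, i.e. isomorphic to $\Upgamma(\Uplambda F)\otimes_{\cinfm}\Gamma(E)$. Since the split structure gives an isomorphism $\mathcal{A}\cong\mathrm{gr}(\mathcal{A})$ compatible with the grading, this identifies $\mathcal{L}$ with $\mathcal{A}\otimes\Gamma(E)$.

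Concretely, the construction of the global isomorphism proceeds in two stages. First, over each trivializing chart the isomorphism is clear by freeness; second, one must check that the transition functions of $\mathcal{L}$ — which a priori take values in the supergroup $GL(p|q;\mathcal{A})$, with off-diagonal nilpotent entries — are cohomologous, via the splitting, to transition functions valued in the purely even reduced group $GL(p;\cinfm)\times GL(q;\cinfm)$ twisted by $\Upgamma(\Uplambda F)$. Equivalently, I would use a partition of unity subordinate to a trivializing cover to glue the local free generators into global sections $\tilde\sigma_\alpha$ of $\mathcal{L}$ lifting a global frame of $E$, exploiting that the obstruction to patching lies in $H^1$ of a fine (hence acyclic) sheaf of $\cinfm$-modules. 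This is where the smooth (rather than holomorphic) hypothesis, together with splitness, is essential: it guarantees the existence of the global splitting and kills the gluing obstruction.

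The main obstacle I anticipate is the global patching step — specifically, showing that the lift from the reduced bundle $E$ back to $\mathcal{L}$ can be made $\mathcal{A}$-linearly coherent across charts, so that the local isomorphisms assemble into a single well-defined sheaf morphism. Verifying injectivity and surjectivity of the resulting map $\mathcal{A}\otimes\Gamma(E)\to\mathcal{L}$ is then a filtration argument: a morphism of filtered $\mathcal{A}$-modules that induces an isomorphism on the associated graded pieces (which here are free $\cinfm$-modules of the right rank) is itself an isomorphism, provided the filtration is exhaustive and the nilpotency $\mathcal{N}^{n+1}=0$ terminates it — which it does, since $F$ has finite rank $n$. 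I would handle the $\mathbb{Z}_2$-grading throughout by keeping the even and odd parts of $E$ and of each filtration quotient separate, so that the final isomorphism respects the supermodule structure as required.
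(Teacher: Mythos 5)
The paper does not prove Theorem \ref{superbundles} at all: it defers entirely to \cite{JA}, so there is no internal argument to compare yours against. Judged on its own merits, your proposal is the standard smooth-category splitting argument (a module-theoretic analogue of Batchelor's theorem) and its skeleton is sound: define $E$ by $\Gamma(E)=\mathcal{L}/\mathcal{N}\mathcal{L}$, which is locally free over $\cinfm$ of rank $(p|q)$; use the embedding $\cinfm\hookrightarrow\mathcal{A}$ provided by splitness (this is where the tensor product in the statement, taken over $\cinfm$, makes sense); construct an even $\cinfm$-linear splitting $s:\Gamma(E)\to\mathcal{L}$ of the quotient map; extend $\mathcal{A}$-linearly to $\Phi:\mathcal{A}\otimes\Gamma(E)\to\mathcal{L}$, $a\otimes\sigma\mapsto a\,s(\sigma)$; and conclude that $\Phi$ is an isomorphism because it induces one on the associated graded pieces of the finite filtration by $\mathcal{N}^k\mathcal{L}$ (finite since $\mathcal{N}^{n+1}=0$, $n=\mathrm{rank}\,F$). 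You also correctly isolate where smoothness enters: local splittings form an affine space over $\mathrm{Hom}_{\cinfm}(\Gamma(E),\mathcal{N}\mathcal{L})$, a fine sheaf, so they glue by a partition of unity (equivalently, the $H^1$ obstruction vanishes); this is exactly what fails in the holomorphic category. Your self-contained argument thus buys something the paper does not supply, at the cost of being less general than \cite{JA} only in presentation, not substance; note too that your step identifying $\mathcal{A}$ with $\mathrm{gr}(\mathcal{A})$ is not something to argue for, it is precisely the splitness hypothesis the paper imposes after Definition \ref{superman}.

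One sentence should be repaired: you propose to glue local generators into global sections \emph{lifting a global frame of $E$}. A global frame need not exist, since $E$ is in general a nontrivial bundle, so frames cannot be lifted; what the partition of unity glues are the local splitting \emph{homomorphisms} $s_U:\Gamma(E)|_U\to\mathcal{L}|_U$, not frames. In the cocycle formulation you sketch, the reduction of the $GL(p|q;\mathcal{A})$-valued transition functions to block form is likewise an induction along the nilpotent filtration, killing an obstruction in $H^1$ of a fine sheaf of $\cinfm$-modules at \emph{each} stage, not a single application of $H^1=0$. Since the correct mechanism already appears elsewhere in your write-up, this is a local slip rather than a genuine gap, but as literally written that step would fail.
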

\begin{example}\label{taniso}
Given any split supermanifold $(M,\se F))$, we have the isomorphism
$\der\se F)\simeq \se F) \otimes \Gamma(TM\oplus F^*)$
(simply choose any connection $\nabla$ on the vector bundle $F\to M$, and make use of Theorem \ref{thm1}).
\end{example}
\begin{example}\label{vvsforms}
We will need also the notion of vector-valued $k-$superforms. These are morphisms of sheaves of
$\ac-$supermodules $S:\der(\ac)\times \stackrel{k)}{\ldots}\times\der(\ac)\to\mathcal{L}$, with
$\mathcal{L}$ a graded vector bundle over $\ma$, and form the sheaf of $\ac-$modules $\oo^k(\ma;\mathcal{L})$.
In particular, $\mathcal{L}-$valued $0-$superforms
are just elements of the sheaf of $\ac-$modules $\mathcal{L}$, and $\mathcal{L}-$valued $1-$superforms
can be identified with $\oo^1\ma\otimes\mathcal{L}$.
\end{example}

Next, we define graded connections, mimicking the notion of classical Koszul connection (see \cite{KM}, \cite{Kz2}).
\begin{definition}\label{gra-con}
Let ${\mathcal L}$ be a graded vector bundle over the supermanifold
$(M,{\mathcal A})$.
A graded connection on ${\mathcal L}$ is an \emph{even} morphism of left ${\mathcal A}-$modules from
$\nnabla :{\rm Der}({\mathcal A})\to \mathrm{End}({\mathcal L})$, whose action is denoted
$\delta\mapsto \nnabla_{\delta}$,
satisfying the Leibniz property:
$$
\nnabla_{\delta}(\al \phi) = \delta(\al)\phi + (-1)^{|\al||\delta|}\al \nnabla_{\delta}\phi ,
$$
where $\delta\in {\rm Der}({\mathcal A})$, $\phi\in {\mathcal L}$  and $\al\in \mathcal{A}$.
In particular, a graded connection on the $\mathcal{A}-$module $\mathcal{L}=\der (\mathcal{A})$ is called
a graded linear conection.\\
The graded curvature of $\nnabla$, $R^{\nnabla}$, is given by:
$$
R^{\nnabla}(\delta,\delta')\phi=
\nnabla_{\delta}\nnabla_{\delta'}\phi-(-1)^{|\delta||\delta'|}\nnabla_{\delta'}\nnabla_{\delta}\phi-\nnabla_{[\delta,\delta']}\phi,
$$
where $\delta,\delta'\in \der({\mathcal A})$ and $\phi\in\mathcal{L}$.
\end{definition}
\begin{remark}\label{leibniz}
The Leibniz property above, can be rewritten as
$$
[\nnabla,\al ]=\bm{\mathrm{d}}\al ,
$$
for any $0-$superform $\al\in \mathcal{A}$, in terms of the graded differential (see Example \ref{lad}) and
the graded commutator \eqref{eq1}. Also, the
graded curvature can be defined as
$$
R^{\nnabla}(\delta,\delta')=[\nnabla_{\delta},\nnabla_{\delta'}]-\nnabla_{[\delta,\delta']},
$$
where now the graded commutators are those of $\mathrm{End}(\mathcal{L})$
and $\mathrm{End}(\mathcal{A})$, respectively, in the first and second terms of the right hand side
(notice that the last bracket restricts to another one on $\der(\mathcal{A})$, that is: the bracket
of two derivations is again a derivation, as can be readily checked).
\end{remark}

\section{The Cartan-Koszul supermanifold}\label{carkos}
From Remarks \ref{rem-qui} and \ref{leibniz}, it is clear that, although their definitions are formally
similar, Quillen superconnections and graded connections are quite different objects: the module considered
in the first case is $\fvv$, endowed with the total grading, while in the second, it is
$\oo (\ma;\mathcal{L})$ with its natural $\Z\times\Z_2 -$bigrading. Of course, a reason for that
difference is that (despite its name), Quillen superconnections are defined without reference to any supermanifold.
However, the two notions can be brought closer if we restrict our attention to graded connections on a
particular supermanifold, as we will now see.

Suppose we are given a supervector bundle over the ordinary manifold $M$, $E = E_0\oplus E_1$.
Consider the split supermanifold whose sheaf of superalgebras is the module of differential
forms on the base manifold, $(M,{\mathcal A}) = (M,\Omega(M))=(M,\se T^\ast M))$. This is called the
Cartan-Koszul (or, simply, Cartan) supermanifold \cite{Uhl}.
Also, consider the graded vector bundle on $(M,\om)$ defined by $E = E_0\oplus E_1$, that is,
$\mathcal{L}=\om\otimes\ges$ (recall Theorem \ref{superbundles}). In this case, a graded
connection $\nnabla$ on $(M,\om)$ gives,
for each $\delta\in\der\om$, an even morphism of $\om-$modules $\nnabla_{\delta}:\om\otimes\ges\to\om\otimes\ges$.
%Elements of $\om\otimes\se E)$ have the form $\al\otimes\phi$, with $\alpha\in \Omega(M)$, $\phi\in\Gamma(E)$;
%to know $\nnabla_{\delta}(\al\otimes\phi)= \nnabla_{\delta}(\al)\otimes \phi +\al\otimes \nnabla_{\delta}(\phi)$,
%it suffices to give the actions of $\nnabla_{\delta}$ on $\om$ and $\se E)$. By Leibniz property (see Remark \ref{leibniz}),
%$\nnabla_{\delta}(\al)=\delta (\al)$, so we are left only with the action $\nnabla_{\delta}:\se E)\to\se E)$.
We put on $\ges$ a structure of $\om-$module through the injection of $\cinfm-$modules
$\ges\to\bm{1}\otimes\ges\subset \om\otimes\ges$, where $\bm{1}$ is the constant function on $M$,
$\bm{1}(x)=1\in\R$, for all $x\in M$. Then, if $\alpha\in \Omega(M)$, $\phi\in\ges$, we can define
$\al\phi =\bm{1}\otimes \al\phi=\al\otimes\phi$, and:
\begin{align}\label{eq-delta}
\nnabla_{\delta}(\alpha\phi) &= \nnabla_{\delta}(\alpha\otimes\phi)\nonumber \\
						&= \delta(\alpha)\otimes \phi + (-1)^{|\delta||\alpha|} \alpha\otimes \nnabla_{\delta}\phi \\
						&= \delta(\alpha)\phi + (-1)^{|\delta||\alpha|} \alpha \nnabla_{\delta}\phi .\nonumber
\end{align}
Thus, we can think of a graded connection on $\mathcal{L}=\om\otimes\ges$ (over $(M,\om)$) as
a map $\nnabla:{\rm Der}(\om)\otimes\ges\to\ges$, such that for each $\delta\in\der\om$, it gives
an even morphism of $\om-$modules, $\nnabla_{\delta}:\ges\to\ges$, satisfying
the Leibniz rule:
$\nnabla_{\delta}(\alpha\phi)=\delta(\alpha)\phi + (-1)^{|\delta||\alpha|} \alpha \nnabla_{\delta}\phi$.

The supervector fields on $(M,\om)$ are the derivations of $\om$. Theorem \ref{thm1} in this case
states that, once a linear connection $\nabla$ on $TM$ is fixed, any $\delta\in\der\om$ can be written as
$\delta =\nabla_K +i_L$, for some vector-valued forms $K,L\in\se T^*M\otimes TM)$. By the
linearity properties $\nabla_{\al\otimes X}=\al\wedge\nabla_X$, $i_{\al\otimes X}=\al\wedge i_X$ (where
$\al\in\om$ and $X\in\mathcal{X}(M)$), a basis for the locally-free finitely generated sheaf of $\om-$modules $\der\om$,
consists of derivations $\{ \nabla_X ,i_X \}$, and to give the action of a graded connection $\nnabla$, it
suffices to know $\nnabla_{\nabla_X}\phi$, and $\nnabla_{i_X}\phi$, for any $\phi\in\ges$.
\begin{proposition}\label{tensor-associated-graded-connection}
Let $\nnabla$ be a graded connection on the graded vector bundle $\mathcal{L}=\om\otimes\ges$ over the supermanifold $(M,\Omega(M))$,
$\nabla$ be a connection on $TM$, and $\nabla^E$ be a compatible connection on $E$. Then, there is a tensor
field $K=K_0+K_1\in \Upgamma(T^\ast M\otimes \Uplambda T^\ast M\otimes {\rm End}(E))$, with:
\begin{align*}
\displaystyle K_0:TM&\to& \bigoplus_{i\ge 0}\se^{2i} T^\ast M\otimes {\rm End}^0(E))\oplus \bigoplus_{i\ge 0}\se^{2i+1} T^\ast M\otimes {\rm End}^1(E)),\\[3mm]
\displaystyle K_1:TM&\to& \bigoplus_{i\ge 0}\se^{2i+1} T^\ast M\otimes {\rm End}^0(E))\oplus \bigoplus_{i\ge 0}\se^{2i} T^\ast M\otimes {\rm End}^1(E)),
\end{align*}
and such that:
\begin{equation}\label{def-nnabla}
\begin{cases}
\begin{array}{rcl}
\nnabla_{\nabla_X}\phi  &=& \nabla^E_X\phi + K_0(X;\phi),\\[3mm]
\nnabla_{i_X}\phi  &=& K_1(X;\phi).
\end{array}
\end{cases}
\end{equation}
Moreover the connection is completely determined by these two tensor
fields and the connections $\nabla$, $\nabla^E$.
\end{proposition}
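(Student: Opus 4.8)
The plan is to reduce the graded connection to its action on a set of generators, read off the two tensors directly from \eqref{def-nnabla}, and then carry out a bigrading computation to place them in the prescribed summands. The two structural inputs are: that $\der\om$ is generated as a left $\om$-module by the derivations $\{\nabla_X,i_X\}$ (Theorem \ref{thm1} together with $\nabla_{\al\otimes X}=\al\wedge\nabla_X$ and $i_{\al\otimes X}=\al\wedge i_X$), and that $\mathcal{L}=\om\otimes\ges$ is generated as a left $\om$-module by $\ges=\bm{1}\otimes\ges$. Because $\nnabla$ is $\om$-linear in the derivation argument and obeys the Leibniz rule \eqref{eq-delta}, its whole action is recovered from the values $\nnabla_{\nabla_X}\phi$ and $\nnabla_{i_X}\phi$ with $\phi\in\ges$. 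I would therefore \emph{define}
$$K_0(X;\phi):=\nnabla_{\nabla_X}\phi-\nabla^E_X\phi,\qquad K_1(X;\phi):=\nnabla_{i_X}\phi,$$
so that \eqref{def-nnabla} holds by construction; this reduction is also exactly the ``completely determined'' assertion, since conversely any choice of $(K_0,K_1,\nabla,\nabla^E)$ reconstructs $\nnabla$ on the generators and hence everywhere via $\om$-linearity and \eqref{eq-delta}.

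First I would verify that $K_0$ and $K_1$ are genuine tensors, that is $\cinfm$-linear separately in $X$ and in $\phi$, so that they are sections of $\Upgamma(T^\ast M\otimes\Uplambda T^\ast M\otimes\mathrm{End}(E))$. Linearity in $X$ is immediate from the $\om$-linearity of both $\nnabla$ and $\nabla^E$ in the vector-field slot, using $\nabla_{fX}=f\nabla_X$ and $i_{fX}=f\,i_X$. Linearity in $\phi$ is the step where the Leibniz rules must cancel: for $K_0$, \eqref{eq-delta} applied to $\nnabla_{\nabla_X}(f\phi)$ produces the term $\nabla_X(f)\phi=X(f)\phi$, which is precisely cancelled by the derivative term of $\nabla^E_X(f\phi)$; for $K_1$ one uses that $i_X$ lowers form degree by one, so $i_X(f)=0$ for $f\in\cinfm$ and no derivative term appears at all.

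The hard part will be the parity bookkeeping that pins each tensor to the exact summands of the statement; this is where the $\Z\times\Z_2$-bigrading must be handled carefully. Since $\nnabla$ is an \emph{even} morphism (Definition \ref{gra-con}), the parity of $\nnabla_\delta$ equals that of $\delta$, and by Example \ref{ex3} we have $\nabla_X\in\der^0\om$ even and $i_X\in\der^{-1}\om$ odd. Thus, for $\phi\in\Upgamma(E_j)$, both $\nnabla_{\nabla_X}\phi$ and $\nabla^E_X\phi$ have total parity $j$ (compatibility of $\nabla^E$ meaning that it preserves the $\Z_2$-grading of $E$, with $\nabla^E_X\phi$ in form degree $0$), so $K_0(X;\phi)$ has total parity $j$ as well. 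Expanding $K_0(X;\cdot)$ into homogeneous pieces in $\se^p T^\ast M\otimes\mathrm{End}^0(E))$ and $\se^p T^\ast M\otimes\mathrm{End}^1(E))$, a piece of the first kind carries $\Upgamma(E_j)$ to total parity $p+j$ and one of the second kind to total parity $p+1+j$; matching both to $j$ forces $p$ even in the $\mathrm{End}^0$ summand and $p$ odd in the $\mathrm{End}^1$ summand, which is exactly the claimed shape of $K_0$. Running the identical count for $\nnabla_{i_X}\phi$, now of total parity $j+1$ because $i_X$ is odd, flips both parities of $p$ and yields the claimed shape of $K_1$. Assembling these steps proves the decomposition, and together with the reconstruction noted above completes the determination claim.
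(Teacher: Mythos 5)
Your proof is correct and follows essentially the same route as the paper's: define $K_0(X;\phi)=\nnabla_{\nabla_X}\phi-\nabla^E_X\phi$ and $K_1(X;\phi)=\nnabla_{i_X}\phi$, note that this is equivalent to the ``completely determined'' claim via the generators $\{\nabla_X,i_X\}$ of $\der\om$, and then verify $\cinfm$-linearity in both arguments through the cancellation of the derivative terms coming from the two Leibniz rules. You actually go slightly further than the paper, which writes out the tensoriality computation only for $K_0$ (declaring $K_1$ analogous) and leaves the placement into even/odd summands implicit, whereas you spell out the $i_X(f)=0$ argument for $K_1$ and carry out the parity count (evenness of $\nnabla$, $\nabla_X$ even, $i_X$ odd) that pins $K_0$ and $K_1$ to exactly the stated summands.
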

\begin{proof}
It is clear that $K_0$ and $K_1$ are determined by \eqref{def-nnabla}; in turn, once $K_0$, $K_1$,
and $\nabla$, $\nabla^E$ are given, $\nnabla$ is uniquely characterized. Thus, we only need to prove that
$K_0$, $K_1$ are tensor fields. We will show this for $K_0$ (the computations in the case of $K_1$ are
analogous). For any $f\in C^\infty(M)$, $X\in\mathcal{X}(M)$, and $\phi\in\ges$:
$$
\begin{array}{rcl}
K_0(fX;\phi)&=&\nnabla_{\nabla_{fX}}\phi  - \nabla^E_{fX}\phi\\[3mm]
&=& \nnabla_{f\nabla_{X}}\phi  -f\nabla^E_{X}\phi\\[3mm]
&=&f\left(\nnabla_{\nabla_{X}}\phi  -\nabla^E_{X}\phi\right)\\[3mm]
&=&fK_0(X;\phi).
\end{array}
$$
Also, applying \eqref{eq-delta}:
$$
\begin{array}{rcl}
K_0(X;f\phi)&=&\nnabla_{\nabla_{X}}(f\phi)  - \nabla^E_{X}(f\phi)\\[3mm]
&=& \nabla_{X}(f)\phi+f\nnabla_{\nabla_{X}}\phi -  X(f)\phi -f\nabla^E_{X}\phi\\[3mm]
&=& X(f)\phi+f\nnabla_{\nabla_{X}}\phi -  X(f)\phi -f\nabla^E_{X}\phi\\[3mm]
&=&f\left(\nnabla_{\nabla_{X}}\phi  -\nabla^E_{X}\phi\right)\\[3mm]
&=&f K_0(X;\phi).
\end{array}
$$
\end{proof}
As an immediate consequence, we get the structure of the space of graded connections on the Cartan-Koszul supermanifold.
\begin{proposition}\label{prop4}
The space of connections on the graded vector bundle $\mathcal{L}=\om\otimes\ges$, over the supermanifold $(M,\Omega(M))$,
is an affine space modeled on:
$$
\Upgamma\left( T^\ast M \otimes \Uplambda T^\ast M\otimes {\rm End}(E)\right).
$$
\end{proposition}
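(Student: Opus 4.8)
The plan is to treat Proposition \ref{prop4} as the standard ``difference--of--connections'' statement: to exhibit a set as an affine space modeled on a vector space $V$, it suffices to check that the set is non-empty, that the difference of any two of its elements is a well-defined element of $V$, and that adding any element of $V$ to a given element produces another element of the set. Here $V=\Upgamma(T^\ast M\otimes\Uplambda T^\ast M\otimes{\rm End}(E))$, and Proposition \ref{tensor-associated-graded-connection} already does almost all of the work, so the argument is essentially a repackaging of the bijection established there (which is why it is flagged as an ``immediate consequence'').

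First I would show that the difference of two graded connections is a tensor. If $\nnabla$ and $\nnabla'$ are graded connections on $\mathcal{L}=\om\otimes\ges$, then by Remark \ref{leibniz} we have $[\nnabla,\al]=\bm{\df}\al=[\nnabla',\al]$ for every $0$-superform $\al\in\om$, whence $[\nnabla-\nnabla',\al]=0$. Thus $B:=\nnabla-\nnabla'$ is an algebraic operator in the sense of Definition \ref{def2}: each $B_\delta$ is $\om$-linear on $\mathcal{L}$, and since $\delta\mapsto\nnabla_\delta$ and $\delta\mapsto\nnabla'_\delta$ are morphisms of left $\om$-modules, so is $\delta\mapsto B_\delta$. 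Hence $B$ is an even morphism of left $\om$-modules from $\der\om$ to ${\rm End}_{\om}(\mathcal{L})$. Using the $\om$-basis $\{\nabla_X,i_X\}$ of $\der\om$ furnished by Theorem \ref{thm1} (equivalently, the identification of Example \ref{taniso}), $B$ is completely determined by $B_{\nabla_X}$ and $B_{i_X}$, and the same $fX$-computation as in the proof of Proposition \ref{tensor-associated-graded-connection} shows that these depend $\cinfm$-linearly on $X$.

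Next I would identify this tensor with an element of $V$ through the parity bookkeeping. Under the canonical isomorphism ${\rm End}_{\om}(\mathcal{L})\cong\Upgamma(\Uplambda T^\ast M\otimes{\rm End}(E))$, the values $B_{\nabla_X}$ and $B_{i_X}$ become $\cinfm$-linear maps $TM\to\Upgamma(\Uplambda T^\ast M\otimes{\rm End}(E))$. Since $B$ is even, while $\nabla_X$ is even (degree $0$) and $i_X$ is odd (degree $-1$), the map $B_{\nabla_X}$ must be even and $B_{i_X}$ odd. The even-parity part of $\Upgamma(\Uplambda T^\ast M\otimes{\rm End}(E))$ is (even-degree forms)$\otimes{\rm End}^0(E)$ together with (odd-degree forms)$\otimes{\rm End}^1(E)$, and the odd-parity part is its complement; these are precisely the target spaces of $K_0$ and $K_1$ in Proposition \ref{tensor-associated-graded-connection}. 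Their direct sum recovers all of $\Upgamma(T^\ast M\otimes\Uplambda T^\ast M\otimes{\rm End}(E))=V$, so $B\leftrightarrow(K_0-K_0',\,K_1-K_1')\in V$, a difference that is manifestly independent of the auxiliary connections $\nabla,\nabla^E$ since those cancel.

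Finally, non-emptiness and the converse follow directly from Proposition \ref{tensor-associated-graded-connection}: any choice of $K\in V$ (for fixed $\nabla,\nabla^E$) defines a graded connection via \eqref{def-nnabla}, and adding a tensor $K\in V$ to a graded connection preserves the Leibniz rule, the added term being $\om$-linear and therefore contributing nothing to $[\,\cdot\,,\al]$. Hence $V$ acts freely and transitively on the set of graded connections, which is therefore an affine space modeled on $V$. The only genuinely delicate point --- the hard part --- is the parity accounting: one must verify that the evenness constraint distributes the four possible (form-parity, endomorphism-parity) combinations between $K_0$ and $K_1$ so that their sum fills all of $V$ with no residual constraint, and this is exactly what the splitting in Proposition \ref{tensor-associated-graded-connection} guarantees.
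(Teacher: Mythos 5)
Your proposal is correct and takes essentially the same route as the paper, which offers no separate proof but presents Proposition \ref{prop4} as an immediate consequence of Proposition \ref{tensor-associated-graded-connection}; your argument is exactly the unpacking of that parametrization, showing that the difference of two graded connections is an even, $\om$-linear algebraic operator determined by its values on the basis $\{\nabla_X,i_X\}$ of $\der\om$, with the parity bookkeeping identifying it with the unconstrained pair $(K_0-K_0',K_1-K_1')\in\Upgamma\left(T^\ast M\otimes\Uplambda T^\ast M\otimes\mathrm{End}(E)\right)$. Your parity accounting (even part of $\Upgamma(\Uplambda T^\ast M\otimes\mathrm{End}(E))$ for $K_0$, odd part for $K_1$, summing to the full space) matches the target spaces displayed in Proposition \ref{tensor-associated-graded-connection}, so nothing is missing.
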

The similarity between this space and the one modeling superconnections (see \eqref{modeled-Quillen}), will allow us to
associate a Quillen superconnection to any graded connection, in the next Section. For this, we will need some facts
about the covariant graded differential associated to a graded connection.

Given the supermanifold $(M,\om )$, and a graded vector bundle $\mathcal{L}=\om\otimes\ges$ over it, we can construct the
$\mathcal{L}-$valued superforms, $\oo((M,\om );\mathcal{L})$, and their derivations, as in Definition \ref{def2}.
Thus, if $\mathcal{D}\in\der\oo((M,\om );\mathcal{L})$, there exists a graded derivation, $\overline{\mathcal{D}} :\oo(M,\om)\to\oo(M,\om)$,
such that:
$$
[\mathcal{D},\lambda ]=\overline{\mathcal{D}}(\lambda) .
$$
As $\oo((M,\om );\mathcal{L})\simeq \oo(M,\om)\otimes \mathcal{L}$, any $\mathcal{D}\in\der\oo((M,\om );\mathcal{L})$ is
characterized by its associated derivation, $\overline{\mathcal{D}}$, and its action $\mathcal{D}(\phi)$, for $\phi\in\ges$.
Then, it is extended to all of $\oo((M,\om );\mathcal{L})$ as a derivation of $\Z\times\Z_2-$bidegree $(1,0)$.
\begin{definition}
Given a graded connection $\nnabla$ on the graded vector bundle $\mathcal{L}=\om\otimes\ges$ over $(M,\om )$,
its covariant graded differential is the graded derivation $\bm{\df}^{\nnabla}\in\der\oo((M,\om );\mathcal{L})$ given by
$$
\overline{\bm{\df}^{\nnabla}}=\bm{\df}\mbox{ and }\bm{\df}^{\nnabla}(\phi)=\nnabla \phi ,
$$
where $\bm{\df}$ is the graded differential.
\end{definition}
As a consequence of Proposition \ref{prop4}, every covariant
graded differential can be written locally as $\bm{\df}^{\nnabla}=\bm{\df}+P$, where $P\in\oo((M,\om);\mathrm{End}(\mathcal{L}))$ has
$\Z\times\Z_2-$bidegree $(1,0)$. Indeed, in view of the isomorphisms $\mathcal{L}\simeq\om\otimes\ges$, and
$\oo((M,\om);\mathrm{End}(\mathcal{L}))\simeq\oo(M,\om)\otimes\mathrm{End}(\mathcal{L})$,
we can consider $P\in\oo((M,\om);\mathrm{End}(E))$.
These computations can be readily generalized to any split supermanifold $(M,\se F))$, where $F\to M$ is a vector bundle over $M$.
Theorems \ref{thm1} and \ref{superbundles} can be applied in this case, and the following results, whose proof is obvious,
hold (we will apply them in Section \ref{curvatures} to the computation of the Chern classes of graded vector bundles over split supermanifolds).
\begin{proposition}
The space of connections on a graded vector bundle $\mathcal{L}$, over the supermanifold $(M,\se F))$, is an
affine superspace modeled on:
$$
\oo^{(1,+)}((M,\se F));\mathrm{End}(E))=\bigoplus_{k\in\mathbb{N}\bigcup\{0\}}\oo^{(1,2k)}((M,\se F));\mathrm{End}(E)),
$$
where $E=E_0\oplus E_1$ is a supervector bundle over $M$ such that $\mathcal{L}\simeq \se F)\otimes\ges$.
\end{proposition}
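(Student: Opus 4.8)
The plan is to run, almost verbatim, the argument already given for the Cartan-Koszul supermanifold, replacing $\om=\se T^\ast M)$ by the general structural sheaf $\mathcal{A}=\se F)$ throughout. First I would use Theorem~\ref{superbundles} to fix an isomorphism $\mathcal{L}\simeq\se F)\otimes\ges$ for a supervector bundle $E=E_0\oplus E_1$ over $M$; this induces $\mathrm{End}(\mathcal{L})\simeq\se F)\otimes\mathrm{End}(E)$ and identifies $\mathcal{L}$-valued superforms with $\mathrm{End}(E)$-valued superforms, reducing the abstract statement about $\mathcal{L}$ to one about the fixed bundle $E$ over the base $M$.

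Next I would exhibit a reference graded connection, so that the space in question is nonempty. Here Theorem~\ref{thm1} is the essential tool: fixing a linear connection $\nabla$ on $F^\ast$ gives, for each $\delta\in\der\se F)$, the decomposition $\delta=\nabla_K+i_L$, so $\der\se F)$ is locally free with generators $\nabla_X$ (even, $X\in\mathcal{X}(M)$) and $i_\beta$ (odd, $\beta\in\Upgamma(F^\ast)$). It then suffices to prescribe a candidate on these generators---for instance $\nnabla_{\nabla_X}=\nabla^E_X$ and $\nnabla_{i_\beta}=0$ for a chosen connection $\nabla^E$ on $E$---and extend by $\mathcal{A}$-linearity; the Leibniz rule of Definition~\ref{gra-con} then holds by construction, exactly as in Proposition~\ref{tensor-associated-graded-connection}.

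For the affine structure I would argue as in the discussion preceding Proposition~\ref{prop4}. If $\nnabla$ and $\nnabla'$ are two graded connections, then by Remark~\ref{leibniz} one has $[\nnabla-\nnabla',\al]=\bm{\df}\al-\bm{\df}\al=0$ for every $0$-superform $\al\in\mathcal{A}$, so the difference is algebraic in the sense of Definition~\ref{def2}, i.e. an $\mathcal{A}$-linear element $P\in\oo((M,\se F));\mathrm{End}(\mathcal{L}))$; conversely, adding any such algebraic $P$ to $\nnabla'$ preserves the Leibniz rule and yields another graded connection. Hence the space is affine and modeled on these tensorial differences. It remains to pin down the degree of $P$: since the covariant graded differentials raise superform degree by one, $P$ has superform degree $1$, and since $\nnabla$ is an \emph{even} morphism, $P$ must have even total parity. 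Under $\mathrm{End}(\mathcal{L})\simeq\se F)\otimes\mathrm{End}(E)$ this places $P$ exactly in the even-parity subspace $\oo^{(1,+)}((M,\se F));\mathrm{End}(E))=\bigoplus_{k\ge0}\oo^{(1,2k)}((M,\se F));\mathrm{End}(E))$, the summands collecting the homogeneous pieces of even total degree.

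The one genuinely delicate point, and the step I would treat most carefully, is this final parity bookkeeping. One must verify that the even-morphism condition forces the value of $P$ on the even generators $\nabla_X$ to be an even endomorphism of $\mathcal{L}$ and its value on the odd generators $i_\beta$ to be an odd one, and that---because the odd superform slot contributes an extra unit of parity---both cases land in the even-parity space $\oo^{(1,+)}$ rather than in the full $\oo^1((M,\se F));\mathrm{End}(E))$. This is precisely the mechanism behind the splitting $K=K_0+K_1$ of Proposition~\ref{tensor-associated-graded-connection}; all the remaining content is the same $C^\infty(M)$-tensoriality verification carried out there, which is why the statement can be recorded as immediate.
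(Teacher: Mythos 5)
Your proposal is correct and follows exactly the route the paper intends: the paper records this proposition without a separate proof, declaring it obvious as the generalization of Proposition~\ref{tensor-associated-graded-connection} and Proposition~\ref{prop4} from $(M,\Omega(M))$ to $(M,\se F))$ via Theorems~\ref{thm1} and~\ref{superbundles}, which is precisely what you carry out (generators $\nabla_X$, $i_\beta$ of $\der\se F)$, a reference connection, tensoriality of differences, and the parity bookkeeping behind the splitting $K=K_0+K_1$). Your fleshed-out version, including the observation that the difference of two graded connections is algebraic in the sense of Definition~\ref{def2} and must be even because graded connections are even morphisms, is a faithful expansion of the paper's one-line argument.
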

In particular, any graded connection $\nnabla$ on $\mathcal{L}$ over $(M,\se F))$,
induces a graded covariant differential $\bm{\df}^{\nnabla}$ which is an operator on $\oo((M,\om );E)$ of
$\Z\times\Z_2-$bidegree $(1,0)$, and any such operator can be locally written as
\begin{equation}\label{decomp}
\bm{\df}^{\nnabla}=\bm{\df}+P,\mbox{ for some }P\in\oo^{(1,+)}((M,\se F));\mathrm{End}(E)).
\end{equation}
\begin{proposition}\label{oolin}
The $\Z\times\Z_2-$bigraded algebra of $\mathrm{End}(E)-$valued superforms, $\oo((M,\se F));\mathrm{End}(E))$, is
isomorphic to the $\Z\times\Z_2-$bigraded algebra of operators $\mathcal{F}\in\mathrm{End}\oo((M,\se F) );E)$ which are $\oo(M,\se F))-$linear,
that is:
$$
\mathcal{F}(\mu(\lambda \otimes \phi ))=(-1)^{\mu\mathcal{F}}\mu\otimes\mathcal{F}(\lambda\otimes\phi),
$$
for every $\lambda,\mu\in\oo(M,\se F))$, and $\phi\in\ges$.
\end{proposition}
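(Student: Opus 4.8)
The plan is to realize this isomorphism as the graded analogue of the classical identification $\om\otimes\mathrm{End}(E)\simeq\mathrm{End}_{\om}(\om\otimes\ges)$, whose only genuinely non-formal ingredient is that $\ges$ is a finitely generated projective $\cinfm$-module (being the module of sections of a vector bundle). The natural map to use is the one that lets an $\mathrm{End}(E)$-valued superform act on an $E$-valued superform by wedging on the form part and applying the endomorphism on the $E$-part, carrying the Koszul sign dictated by the total $\Z_2$-parity. Explicitly, writing a decomposable element of $\oo((M,\se F));\mathrm{End}(E))\simeq\oo(M,\se F))\otimes\Upgamma(\mathrm{End}(E))$ as $\omega\otimes T$, I would define
$$
\Phi(\omega\otimes T)(\mu\otimes\phi)=(-1)^{|T||\mu|}(\omega\wedge\mu)\otimes T\phi ,
$$
for $\mu\in\oo(M,\se F))$ and $\phi\in\ges$, and extend by bilinearity.

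First I would check that $\Phi$ is well defined, i.e. that $\Phi(\omega\otimes T)$ satisfies the displayed $\oo(M,\se F))$-linearity. Replacing $\mu$ by $\mu'\wedge\mu$ and pulling $\mu'$ to the front, the graded-commutativity $\mu'\wedge\omega=(-1)^{|\mu'||\omega|}\omega\wedge\mu'$ of superforms produces precisely the factor $(-1)^{|\mu'|(|\omega|+|T|)}=(-1)^{\mu'\Phi(\omega\otimes T)}$ demanded by the statement, so $\Phi$ indeed lands in the space of $\oo(M,\se F))$-linear operators. Since $\Phi(\omega\otimes T)$ manifestly shifts bidegrees by the bidegree of $\omega\otimes T$, the map $\Phi$ is automatically $\Z\times\Z_2$-bidegree-preserving.

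Next I would verify that $\Phi$ is a homomorphism of bigraded algebras. On the source the product is the graded-tensor product $(\omega\otimes S)(\omega'\otimes S')=(-1)^{|S||\omega'|}(\omega\wedge\omega')\otimes(S\circ S')$, while on the target it is composition of operators. Evaluating both $\Phi((\omega\otimes S)(\omega'\otimes S'))$ and $\Phi(\omega\otimes S)\circ\Phi(\omega'\otimes S')$ on a test element $\mu\otimes\phi$, both sides collapse to the same term $(\omega\wedge\omega'\wedge\mu)\otimes(SS')\phi$, and a short count shows the accumulated signs agree (both equal $(-1)^{|S||\omega'|+|S||\mu|+|S'||\mu|}$), using associativity of $\wedge$ and of $\circ$ and $|SS'|=|S|+|S'|$. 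Thus $\Phi$ respects products, and it visibly sends $\mathbf{1}\otimes\mathrm{id}_E$ to the identity operator.

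Finally I would construct the inverse, which also yields bijectivity. Given an $\oo(M,\se F))$-linear $\mathcal{F}$, its restriction $\phi\mapsto\mathcal{F}(\mathbf{1}\otimes\phi)$ is $\cinfm$-linear from $\ges$ into $\oo(M,\se F))\otimes\ges$ (elements of $\cinfm$ are even of form-degree $0$, so they pass through with no sign). Because $\ges$ is finitely generated and projective over $\cinfm$, one has the natural identification
$$
\mathrm{Hom}_{\cinfm}(\ges,\oo(M,\se F))\otimes\ges)\simeq\oo(M,\se F))\otimes\mathrm{Hom}_{\cinfm}(\ges,\ges)=\oo((M,\se F));\mathrm{End}(E)),
$$
using $\mathrm{Hom}_{\cinfm}(\ges,\ges)\simeq\Upgamma(\mathrm{End}(E))$; concretely, picking a global dual frame $e_i\in\ges$, $e^i\in\Upgamma(E^*)$ with $\phi=\sum_i e^i(\phi)e_i$, one sets $S=\sum_i\mathcal{F}(\mathbf{1}\otimes e_i)\otimes e^i$. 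Then $\Phi(S)$ and $\mathcal{F}$ agree on $\mathbf{1}\otimes\ges$ and are both $\oo(M,\se F))$-linear, hence coincide, so $\Phi$ is bijective with this as inverse. The hard part will be twofold: fixing a single Koszul-sign convention that makes $\Phi$ simultaneously graded-linear and multiplicative (this is what forces the placement of $(-1)^{|T||\mu|}$ above, and it must be propagated consistently through every identity), and justifying the pull-through of $\mathrm{Hom}$ across the tensor product in the graded setting; everything else is sign bookkeeping.
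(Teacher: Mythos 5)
Your proposal is correct, and in fact the paper offers no proof of this proposition at all---it is listed among the results ``whose proof is obvious''---so your argument supplies exactly the standard verification the authors had in mind: the bigraded analogue of $\om\otimes\Upgamma(\mathrm{End}(E))\simeq\mathrm{End}_{\om}(\om\otimes\ges)$, with the sign $(-1)^{|T||\mu|}$ placed to make the map both land in the $\oo(M,\Upgamma(\Uplambda F))-$linear operators and be multiplicative, and with surjectivity resting on $\ges$ being finitely generated projective over $\cinfm$. The only cosmetic point: since $E$ need not be trivial, your ``global dual frame'' should be read as the finite dual basis $\{e_i,e^i\}$ with $\phi=\sum_i e^i(\phi)\,e_i$ furnished by the dual-basis lemma (embedding $E$ in a trivial bundle), which is precisely the projectivity you already invoke, so nothing in the argument breaks.
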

\begin{example}\label{sexample}
Given a graded connection $\nnabla$ on $\mathcal{L}=\se F)\otimes\ges$ over $(M,\se F))$, if $Q\in\oo((M,\se F));\mathrm{End}(E))$, then
$[\bm{\df}^{\nnabla} ,Q]$ is $\oo(M,\se F))-$linear as a graded operator on $\oo((M,\se F));E)$. Indeed, if we take any elements
$\lambda\in\oo(M,\se F))$, $s\in\oo((M,\se F));E)$, then it is clear that:
$$
Q(\lambda s)=(-1)^{\lambda Q}\lambda Q(s).
$$
Therefore (noticing that the $\Z\times\Z_2-$bidegree of $\bm{\df}^{\nnabla}$ is $(1,0)$, the same as that of $\bm{\df}$):
\begin{align*}
[\bm{\df}^{\nnabla} ,Q](\lambda s)&=\bm{\df}^{\nnabla}(Q(\lambda s))-(-1)^{Q\bm{\df}}Q(\bm{\df}^{\nnabla}(\lambda s)) \\
&=(-1)^{\lambda Q}\bm{\df}^{\nnabla}(\lambda Q(s))
-(-1)^{Q\bm{\df}}Q\left( (\bm{\df}\lambda )s+(-1)^{\lambda\bm{\df}}\lambda\bm{\df}^{\nnabla}s\right) \\
&=\lambda\left( (-1)^{\lambda(Q+\bm{\df})}\bm{\df}^{\nnabla}(Q(s))-(-1)^{Q\bm{\df}+\lambda(Q+\bm{\df})}Q(\bm{\df}^{\nnabla}s)\right) \\
&+(-1)^{\lambda Q}\bm{\df}\lambda\, Q(s)-(-1)^{\lambda Q}\bm{\df}\lambda\, Q(s) \\
&=(-1)^{\lambda (Q+\bm{\df})}\lambda [\bm{\df}^{\nnabla} ,Q](s).
\end{align*}
\end{example}

\section{Quillen superconnections induced by graded connections}\label{sec6}
Let $E=E_0\oplus E_1\to M$ be a supervector bundle on $M$, and consider the graded vector bundle $\mathcal{L}=\om\otimes\ges$ over
the supermanifold $(M,\om )$. The main results of this paper, are based on the following remarks.
\begin{enumerate}
\item Differential forms on $M$ with values on $E$ can be identified with $\mathcal{L}-$valued graded
differential $0-$forms on $(M,\Omega(M))$, $\oo^0((M,\om);\mathcal{L})$, through (see Example \ref{vvsforms}):
$$
\fvv\simeq\om\otimes\ges= \mathcal{L}\simeq \oo^0((M,\om);\mathcal{L}) .
$$
Explicitly, we will write $\al\otimes\phi\mapsto \al\phi$, where $\al\in\om$ and $\phi\in\ges$.
\item If $\nnabla$ is a graded connection on the graded vector bundle $\mathcal{L}=\om\otimes\ges$, over the
supermanifold $(M,\om)$, the covariant graded differential associated to $\nnabla$ acts on $E-$valued $0-$superforms:
$$\bm{\df}^{\nnabla}:\oo^0((M,\om);E)\to \oo^1((M,\om);E)$$
Explicitly, if $\alpha \phi\in \oo^0((M,\om);E)$, then  $\bm{\df}^{\nnabla}(\alpha\phi)$ is defined by
$$\langle \delta;\bm{\df}^{\nnabla}(\alpha\phi)\rangle = \nnabla_{\delta}(\alpha\phi)=
\delta(\alpha)\phi+(-1)^{|\delta||\alpha|}\alpha \nnabla_{\delta}\phi\in \Omega^0((M,\Omega(M));E),$$
for any $\delta\in \der\om$.
\item The usual exterior differential on $M$, $\df$, is a derivation of $\Omega(M)$.
Thus (as in Example \ref{ex3}), we can define the insertion (or evaluation) operator associated to it:
$$\iota_{{\rm d}}: \Omega^1((M,\Omega(M));E)\to  \Omega^0((M,\Omega(M));E).$$
\end{enumerate}
\begin{definition}
The composition of the previous three maps define an operator
$$D^{\nnabla}:\Omega(M;E)\to \Omega(M;E),$$ given by:
$$D^{\nnabla}(\alpha\otimes\phi) = \nnabla_{\rm d}(\alpha\phi)\in \Omega^0((M,\Omega(M));E) \simeq \Omega(M;E).$$
\end{definition}
\begin{theorem}
The operator so defined, $D^{\nnabla}$, is a Quillen superconnection.
\end{theorem}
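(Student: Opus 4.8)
The plan is to recognize the operator $D^{\nnabla}$ as nothing but the covariant derivative $\nnabla_{\df}$ transported through the identifications $\fvv\simeq\om\otimes\ges\simeq\oo^0((M,\om);E)$, and then to read off the two defining properties of a Quillen superconnection (Definition \ref{def3}) directly from the axioms of a graded connection. First I would unwind the composition defining $D^{\nnabla}$: since $\bm{\df}^{\nnabla}(\al\phi)$ is the $E$-valued $1$-superform whose evaluation on a supervector field $\delta\in\der\om$ is $\nnabla_{\delta}(\al\phi)$, and since $\iota_{\df}$ inserts the derivation $\df\in\der\om$, the composite is $D^{\nnabla}(\al\phi)=\langle\df;\bm{\df}^{\nnabla}(\al\phi)\rangle=\nnabla_{\df}(\al\phi)$. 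In other words, under the stated identifications $D^{\nnabla}$ acts on $\fvv$ exactly as $\nnabla_{\df}$ acts on $\mathcal{L}=\om\otimes\ges$.

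With this identification in hand, the superconnection property \eqref{def-Quillen} becomes an immediate specialization of the graded Leibniz rule \eqref{eq-delta}. Writing an arbitrary $\sigma\in\fvv$ as a sum of terms $\beta\psi$ with $\beta\in\om$, $\psi\in\ges$, the associativity of the $\om$-module action gives $\al\wedge\sigma=(\al\wedge\beta)\psi=\al\cdot(\beta\psi)$ in $\mathcal{L}$. Applying \eqref{eq-delta} with $\delta=\df$, and using that $\df$ acts on the structural sheaf $\om$ as the exterior differential, $\df(\al)=\df\al$, while its $\Z_2$-degree is $|\df|=1$, I obtain
$$
D^{\nnabla}(\al\wedge\sigma)=\nnabla_{\df}(\al\cdot(\beta\psi))=\df\al\wedge\sigma+(-1)^{|\al|}\al\wedge\nnabla_{\df}(\beta\psi)=\df\al\wedge\sigma+(-1)^{|\al|}\al\wedge D^{\nnabla}\sigma,
$$
which is exactly \eqref{def-Quillen} (equivalently $[D^{\nnabla},\al]=\df\al$, the form of Remark \ref{rem-qui}). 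The key point is that the sign $(-1)^{|\al||\df|}$ produced by the graded connection collapses to the Quillen sign $(-1)^{|\al|}$ precisely because $\df$ is odd; verifying \eqref{def-Quillen} already guarantees that $D^{\nnabla}$ is a first-order differential operator (a derivation on $\fvv$) inducing $\df$ on $\om$.

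It remains to check that $D^{\nnabla}$ has odd total degree, and I expect this degree bookkeeping to be the only delicate point of the argument, since it reconciles the two gradings. Abstractly it is immediate: the exterior differential is a derivation of $\om=\se T^*M)$ of $\Z$-degree $+1$, hence of odd $\Z_2$-degree, and a graded connection is by Definition \ref{gra-con} an \emph{even} morphism $\der\om\to\mathrm{End}(\mathcal{L})$; therefore $\nnabla_{\df}=D^{\nnabla}$ is an odd endomorphism of $\mathcal{L}\simeq\fvv$ for the total $\Z_2$-grading, as required. If desired, one can make this completely explicit by decomposing $\df=\nabla_K+i_L$ via Theorem \ref{thm1} (with $K$ the canonical $TM$-valued $1$-form, i.e. the identity endomorphism of $TM$, and $L$ absorbing the torsion) and invoking Proposition \ref{tensor-associated-graded-connection}: the prescribed bidegrees of $K_0$ and $K_1$ shift the total parity by exactly one, confirming the abstract computation. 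Together with \eqref{def-Quillen}, this shows $D^{\nnabla}$ is a Quillen superconnection in the sense of Definition \ref{def3}.
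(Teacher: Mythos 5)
Your proposal is correct and takes essentially the same approach as the paper: you identify $D^{\nnabla}$ with $\nnabla_{\df}$, obtain \eqref{def-Quillen} from the graded Leibniz rule with $\delta=\df$ (the sign $(-1)^{|\al||\df|}$ collapsing to $(-1)^{|\al|}$ because $\df$ is odd), and deduce the odd total degree from the evenness of $\nnabla$ combined with the $\Z$-degree $1$ of the exterior differential, which is exactly the paper's two-step argument. The optional explicit check via Theorem \ref{thm1} and Proposition \ref{tensor-associated-graded-connection} is harmless additional confirmation, not a different route.
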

\begin{proof}
First notice that, since the graded connection is an even map, and the $\Z-$degree of the exterior derivative is $1$,
the $\Z_2 -$degree of $\nnabla_{\rm d}(\alpha\phi)$ is
$$\left|\nnabla_{\rm d}(\alpha\phi)\right|= 1+|\alpha\phi|.$$
Therefore $D^{\nnabla}$ is of odd total degree.
Moreover, $D^{\nnabla}$ is a Quillen superconnection, because
$$\begin{array}{rcl}
D^{\nnabla}(\alpha\wedge (\beta\otimes\phi)) &=& \nnabla_{\rm d}(\alpha\wedge (\beta\phi ))\\[3mm]
 &=&{\rm d}\alpha\wedge (\beta\phi )+(-1)^{|\alpha|}\alpha\wedge\nnabla_{\rm d}(\beta\phi)\\[3mm]
 &=&{\rm d}\alpha\wedge (\beta\phi )+(-1)^{|\alpha|}\alpha\wedge D^{\nnabla}(\beta\otimes\phi),
\end{array}
$$
and this is \eqref{def-Quillen} in Definition \ref{def3}.
\end{proof}

Not all Quillen superconnection can be generated in this way, but we can prove the following
result.
\begin{theorem}
Any Quillen superconnection, $D$, can be written as
$$D = D^{\nnabla}+ N,$$
where
\begin{enumerate}[(1)]
\item $\nnabla$ is a graded connection on the graded vector bundle $\mathcal{L}=\om\otimes\ges$ on the supermanifold $(M,\om)$,
\item $N\in{\rm End}^1(E)$.
\end{enumerate}
Moreover, the decomposition is unique in the following sense: if $\nnabla'$ is another connection and $N'$ another tensor such that:
$$D = D^{\nnabla'}+ N',$$
then, $D^{\nnabla}= D^{\nnabla'}$, and $N=N'$.
\end{theorem}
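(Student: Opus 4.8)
The plan is to reduce the statement to a comparison of the two affine structures already at hand. Quillen superconnections form an affine space modeled on $\Omega^{-}(M;\ek(E))$ with the explicit description \eqref{modeled-Quillen}, while graded connections on $\mathcal L=\om\otimes\ges$ form an affine space modeled on $\Upgamma(T^\ast M\otimes\Uplambda T^\ast M\otimes\mathrm{End}(E))$ (Proposition \ref{prop4}). First I would fix a \emph{torsion-free} connection $\nabla$ on $TM$ together with a grading-preserving connection $\nabla^E$ on $E$; these determine a reference graded connection $\nnabla^0$ (the one with $K_0=K_1=0$ in Proposition \ref{tensor-associated-graded-connection}), whose induced Quillen superconnection is $D_0:=D^{\nnabla^0}=\df^{\nabla^E}$. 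Any Quillen superconnection then writes $D=D_0+P$ with $P\in\Omega^{-}(M;\ek(E))$, and the entire theorem will hinge on one structural fact: the operators $D^{\nnabla}$ hit every summand of \eqref{modeled-Quillen} \emph{except} the form-degree-zero slot, namely the $k=0$ summand $\mathrm{End}^1(E)$ of its second family --- and that is precisely the slot filled by $N$.

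For existence, I would evaluate $D^{\nnabla}-D_0$ on a section $\phi\in\ges$. Torsion-freeness of $\nabla$ lets Theorem \ref{thm1} write $\df=\nabla_\kappa$ with $\kappa=\mathrm{Id}\in\Omega^1(M;TM)$ of form-degree $1$; using the $\om$-linearity of $\nnabla$ and \eqref{def-nnabla}, for a local frame $\{e_i\}$ with dual coframe $\{e^i\}$,
$$
D^{\nnabla}(\phi)=\nnabla_{\df}\phi=\sum_i e^i\wedge\nnabla_{\nabla_{e_i}}\phi=\df^{\nabla^E}\phi+\sum_i e^i\wedge K_0(e_i;\phi).
$$
Hence $D^{\nnabla}-D_0$ is the algebraic operator $\sum_i e^i\wedge K_0(e_i;\cdot)$, whose form-degree is $\ge 1$. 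Splitting $P=P'+N$ with $N\in\mathrm{End}^1(E)$ the form-degree-zero part and $P'$ the (form-degree $\ge 1$) remainder, I would invert the wedge map by the contraction identity $\sum_i e^i\wedge i_{e_i}\omega=(\deg\omega)\,\omega$: on the form-degree-$p$ component, setting $K_0(X;\phi):=\tfrac1p\,i_X(P'(\phi))$ gives $\sum_i e^i\wedge K_0(e_i;\phi)=P'(\phi)$, and the $\Z\times\Z_2$-bidegrees of Proposition \ref{tensor-associated-graded-connection} match (contracting an odd $\mathrm{End}^0$-valued form lands in the even $K_0$-slot, and contracting an even $\mathrm{End}^1$-valued form of degree $\ge 2$ in the odd one). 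Taking $K_1:=0$ defines $\nnabla$, and since the difference of two superconnections is algebraic (hence determined on sections), $D^{\nnabla}-D_0=P'$, so $D=D_0+P'+N=D^{\nnabla}+N$.

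For uniqueness, the same computation applied to an \emph{arbitrary} graded connection shows that $D^{\nnabla}(\phi)=\nnabla_{\df}\phi$ has vanishing form-degree-zero component: every term carries a factor $e^i$ inherited from the positive form-degree of $\kappa$, and this holds intrinsically, since $\nnabla_{\df}$ does not depend on the auxiliary torsion-free $\nabla$ chosen to decompose $\df$. Thus, if $D=D^{\nnabla}+N=D^{\nnabla'}+N'$, then on sections $D^{\nnabla}\phi-D^{\nnabla'}\phi=(N'-N)\phi$, where the left-hand side is of form-degree $\ge 1$ while the right-hand side is purely of form-degree $0$; both must therefore vanish, giving $N=N'$ at once, and then $D^{\nnabla}-D^{\nnabla'}$ is an algebraic operator annihilating every section, so $D^{\nnabla}=D^{\nnabla'}$.

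The main obstacle is identifying the exact image of the assignment $\nnabla\mapsto D^{\nnabla}$, that is, establishing surjectivity onto the form-degree $\ge 1$ part of \eqref{modeled-Quillen} while checking that the bidegrees of $K_0$ fit \eqref{def-nnabla}. Everything rests on the elementary observation that $\df$ is realized as $\nabla_\kappa$ with $\kappa$ of positive form-degree, so the induced operator can never produce a form-degree-zero term; once this is isolated, both the existence of $N$ (as the single unreachable slot $\mathrm{End}^1(E)$) and its uniqueness follow from the same degree count, and the only remaining care is the routine matching of the $\Z\times\Z_2$-gradings.
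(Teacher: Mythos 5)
Your proof is correct, and it follows the paper's overall skeleton --- expand $\nnabla_{\df}$ through the tensors of Proposition \ref{tensor-associated-graded-connection}, compare form-degrees with the model space \eqref{modeled-Quillen}, and isolate $\Upgamma(\mathrm{End}^1(E))$ as the single unreachable slot --- but it diverges genuinely at the surjectivity step, and to your advantage. The paper works with an arbitrary auxiliary connection $\nabla$ on $TM$ and writes $\df=dx^k\wedge\bigl(\nabla_{\partial_k}+i_{\nabla\partial_k+T(\partial_k,\cdot)}\bigr)$, so that $\nnabla_{\df}\phi=\df^{\nabla^E}\phi+K_0^a(\cdot;\phi)+\widetilde{K_1}(\cdot;\phi)$; it must then verify tensoriality of $K_0^a$ and $\widetilde{K_1}$ by a change-of-coordinates computation and establish surjectivity of $K_1\mapsto\widetilde{K_1}$ by solving a local system $T_{i_1\cdots i_{2k+3}}=K_{i_1\cdots i_{2k+1}}a_kB^k_{[i_{2k+2}i_{2k+3}]}$ in the Christoffel symbols --- a step that in fact requires torsion, since $B^k_{[pq]}$ is a multiple of the torsion and vanishes for symmetric $\nabla$. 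You instead normalize $\nabla$ to be torsion-free, so that $\df=\nabla_{\mathrm{Id}}$ exactly and the $K_1$-channel drops out of $D^{\nnabla}$ altogether; surjectivity onto the form-degree $\ge 1$ part of \eqref{modeled-Quillen} is then achieved entirely through $K_0$, with the Euler identity $\sum_i e^i\wedge i_{e_i}\omega=(\deg\omega)\,\omega$ providing an explicit, manifestly tensorial right inverse $K_0(X;\phi)=\tfrac1p\,i_X(P'_p(\phi))$ to the antisymmetrization --- no coordinate checks and no system of equations. What the paper's torsionful route buys is visibility of how \emph{both} tensors $K_0$ and $K_1$ feed into $D^{\nnabla}$, which is exactly what its subsequent proposition (the criterion $(K_0-K_0')^a=0$, $\widetilde{K_1-K_1'}=0$ for $D^{\nnabla}=D^{\nnabla'}$) is built on; what your route buys is a shorter and more robust existence proof, plus an explicit uniqueness argument --- the form-degree count showing $D^{\nnabla}\phi$ has no degree-zero component for \emph{any} graded connection, independently of the auxiliary torsion-free $\nabla$ --- which the paper's proof leaves entirely implicit. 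Your bidegree bookkeeping (odd $\mathrm{End}^0$-valued forms contract into the even $K_0$-slots, even $\mathrm{End}^1$-valued forms of degree $\ge 2$ into the odd ones, leaving only $\Lambda^0\otimes\mathrm{End}^1$) matches the constraints in \eqref{def-nnabla}, and your appeal to Proposition \ref{prop4} to turn the tensor $(K_0,0)$ back into a graded connection is the same converse the paper itself uses.
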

\begin{proof}
Let us recall (see Example \ref{lieder}) that in a local coordinate system $\{ x^k\}_{1\leq k\leq \dim M}$ on $M$,
$$\begin{array}{rcl}
{\rm d}&=& {\mathscr L}_{\rm Id} = dx^k\wedge{\mathscr L}_{\partial_k}\\[3mm]
&=&
dx^k\wedge\left( \nabla_{\partial_k} +i_{\nabla \partial_k+T(\partial_k,\cdot)}\right) .
\end{array}$$
Therefore, according to Proposition \ref{tensor-associated-graded-connection}, for $\phi\in\Gamma(E)$:
$$\begin{array}{rcl}
\nnabla_{\rm d}\phi&=&
dx^k\wedge\left(\nnabla_{\nabla_{\partial_k}}\phi+\nnabla_{i_{\nabla \partial_k+T(\partial_k,\cdot)}}\phi\right)\\[4mm]
&=&
dx^k\wedge\left(\nabla^E_{\partial_k}\phi+K_0(\partial_k;\phi)+K_1({\nabla \partial_k +T(\partial_k,\cdot)};\phi)\right)\\[4mm]
&=&
\df^{\nabla^E}\phi+K_0^a(\ ;\phi)+\widetilde{K_1}(\ ;\phi),
\end{array}$$
where $K_0^a$ denotes the antisymmetrization of $K_0$, and $\widetilde{K_1}$ is given by the expression
$\widetilde{K_1}(\cdot ;\phi) = dx^k\wedge K_1(\nabla \partial_k +T(\partial_k,\cdot);\phi)$.
Let us check that they are tensor fields. If $\{y^\ell\}_{1\leq \ell\leq \dim M}$ is another set of
coordinate functions, then, by the change of coordinates formulae
$$
\partial_k = \frac{\partial y^\ell}{\partial x^k}\ \partial_{y^\ell},\qquad dx^k = \frac{\partial x^k}{\partial y^\ell}\ dy^\ell ,
$$
we get:
$$\begin{array}{rcl}
dx^k\wedge K_0(\partial_k;\phi)&=&
\frac{\partial x^k}{\partial y^\ell}\ dy^\ell\wedge \frac{\partial y^m}{\partial x^k}\ K_0(\partial_{y^m};\phi)\\[4mm]
&=&
\frac{\partial x^k}{\partial y^\ell}\ \frac{\partial y^m}{\partial x^k}\ dy^\ell\wedge K_0(\partial_{y^m};\phi)\\[4mm]
&=&
\delta^m_\ell\ dy^\ell\wedge K_0(\partial_{y^m};\phi)\\[4mm]
&=&
dy^\ell\wedge K_0(\partial_{y^\ell};\phi).
\end{array}
$$
The computation needed for proving that $\widetilde{K_1}$ is also a tensor field, is longer but
equally straightforward.\\
Now, we can readily check that the operations $K_0\mapsto K^a_0$ and $K_1\mapsto \widetilde{K_1}$
are surjective. For instance, let $T\in\se^{2k+3}T^*M)$ be given, and let us write it locally as:
$$
T=T_{i_1 \cdots i_{2k+3}}\df x^{i_1}\wedge \cdots\wedge \df x^{i_{2k+3}}.
$$
To find a $K\in\Upgamma (T^*M\otimes\Uplambda^{2k+1}T^*M)$ such that $T=\df x^p\wedge K(\nabla \partial_p +T(\partial_p,\cdot);\phi)$,
we also express it locally, as $K=K_{j_1 \cdots j_{2k+1}}\df x^{j_1}\wedge \cdots \wedge \df x^{j_{2k+1}}\otimes a_k \df x^k$, and
write a system of equations for the unknowns $K_{j_1 \cdots j_{2k+1}},a_k, \Upgamma^r_{pq}$, where $\Upgamma^r_{pq}$ are the
Christoffel symbols of the linear connection $\nabla$ on $TM$. To this end, note that:
$$
\nabla \partial_p +T(\partial_p,\cdot)=\Upgamma^r_{qp}\partial_r\otimes\df x^q+2(\Upgamma^r_{pq}-\Upgamma^r_{qp})\partial_r\otimes\df x^q
=(2\Upgamma^r_{pq}-\Upgamma^r_{qp})\partial_r\otimes\df x^q .
$$
Call $B^r_{pq}=2\Upgamma^r_{pq}-\Upgamma^r_{qp}$. Then (denoting by $[ab]$ the antisymmetrization in the indexes $a,b$):
\begin{align*}
\df x^p\wedge K(\nabla \partial_p +T(\partial_p,\cdot);\phi)&=
K_{j_1 \cdots j_{2k+1}}a_k B^k_{[pq]}\df x^p \wedge \df x^{j_1}\wedge \cdots \wedge \df x^{j_{2k+1}}\wedge \df x^q , \\
&=K_{j_1 \cdots j_{2k+1}}a_k B^k_{[pq]}\df x^{j_1}\wedge \cdots \wedge \df x^{j_{2k+1}}\wedge\df x^q \wedge \df x^p .
\end{align*}
Thus, we are led to the systems of equations
$$
T_{i_1 \cdots i_{2k+3}}=K_{i_1 \cdots i_{2k+1}}a_k B^k_{[i_{2k+2}i_{2k+3}]} ,
$$
which clearly admits a solution.\\
On the other hand, if:
$$\begin{array}{l}
K_0\in\Upgamma(T^*M\otimes \Uplambda^{2k}T^\ast M\otimes{\rm End}^0(E)),\mbox{ then } K_0^a\in\Upgamma(\Uplambda^{2k+1}T^\ast M\otimes{\rm End}^0(E)),\\[3mm]
K_0\in\Upgamma(T^*M\otimes \Uplambda^{2k+1}T^\ast M\otimes{\rm End}^1(E)),\mbox{ then } K_0^a\in\Upgamma(\Uplambda^{2k+2}T^\ast M\otimes{\rm End}^1(E)),\\[3mm]
K_1\in\Upgamma(T^*M\otimes \Uplambda^{2k}T^\ast M\otimes{\rm End}^1(E)),\mbox{ then } \widetilde{K_1}\in\Upgamma(\Uplambda^{2k+2}T^\ast M\otimes{\rm End}^1(E)),\\[3mm]
K_1\in\Upgamma(T^*M\otimes \Uplambda^{2k+1}T^\ast M\otimes{\rm End}^0(E)),\mbox{ then } \widetilde{K_1}\in\Upgamma(\Uplambda^{2k+3}T^\ast M\otimes{\rm End}^0(E)).
\end{array}
$$
So, by comparing these tensor fields with those that model Quillen superconnections (see \eqref{modeled-Quillen}),
we see that all of them can be obtained this way, except one. The term that never appears is
$\Upgamma(\Uplambda^{0}T^\ast M\otimes{\rm End}^1(E)) = \Upgamma({\rm End}^1(E))$.
This is why we need to add an $N\in{\rm End}^1(E)$ to get an arbitrary Quillen superconnection.
\end{proof}
\begin{example}\label{quillen-ex}
Quillen's paper \cite{Qui} deals mainly with the particular case of superconnections $D$ (on the superbundle
$E=E_0\oplus E_1\to M$) of total degree not only odd, but exactly $1$.
As seen in Example \ref{example-quillen}, to characterize one of these superconnections we need a connection $\nabla^E$ on the vector
bundle $E\to M$, and an odd vector-valued form $P=A+L\in\oo^0(M;\mathrm{End}^1(E))\oplus \oo^1(M;\mathrm{End}^0(E))$. In view of the
theorem above, all of these superconnections are induced by graded connections on the graded vector bundle $\mathcal{L}=\om\otimes\Upgamma (E)$,
over the supermanifold $(M,\om )$. It suffices to take $N=A$, $K_0=L$, and $K_1=0$.
\end{example}

The next result, which is an immediate corollary of the preceding Theorem, tells us when two graded
connections induce the same Quillen superconnections.
\begin{proposition}
$D^{\nnabla}= D^{\nnabla'}$ if and only if
\begin{enumerate}
\item $(K_0-K'_0)^a = 0$ and,
\item $\widetilde{K_1-K'_1} = 0$.
\end{enumerate}
\end{proposition}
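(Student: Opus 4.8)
The plan is to test the equality $D^{\nnabla}=D^{\nnabla'}$ only on undecorated sections $\phi\in\ges$, and then to read the two stated conditions off the explicit formula for $\nnabla_{\rm d}\phi$ already established in the preceding theorem.

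First I would observe that a Quillen superconnection is, by Definition \ref{def3}, a derivation obeying the Leibniz rule \eqref{def-Quillen}; hence its value on an arbitrary $\al\wedge\phi$ with $\phi\in\ges$ is forced to be $\df\al\wedge\phi+(-1)^{|\al|}\al\wedge D\phi$. Consequently $D^{\nnabla}$ is completely determined by its restriction to $\oo^0(M;E)\simeq\ges$, and so the identity $D^{\nnabla}=D^{\nnabla'}$ is equivalent to $D^{\nnabla}\phi=D^{\nnabla'}\phi$ for every $\phi\in\ges$.

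Next, parametrizing both graded connections through the \emph{same} auxiliary connections $\nabla$ on $TM$ and $\nabla^E$ on $E$ (Proposition \ref{tensor-associated-graded-connection}), I would invoke the computation of the preceding proof,
$$
D^{\nnabla}\phi=\nnabla_{\rm d}\phi=\df^{\nabla^E}\phi+K_0^a(\ ;\phi)+\widetilde{K_1}(\ ;\phi),
$$
together with the analogous expression for $\nnabla'$ in terms of $K'_0,K'_1$. Since both share the same $\nabla^E$, the terms $\df^{\nabla^E}\phi$ cancel on subtraction; and since antisymmetrization $K_0\mapsto K_0^a$ and the map $K_1\mapsto\widetilde{K_1}$ are $\R$-linear, I obtain
$$
D^{\nnabla}\phi-D^{\nnabla'}\phi=(K_0-K'_0)^a(\ ;\phi)+\widetilde{K_1-K'_1}(\ ;\phi).
$$
Thus $D^{\nnabla}=D^{\nnabla'}$ is equivalent to the single vanishing $(K_0-K'_0)^a+\widetilde{K_1-K'_1}=0$ of $\mathrm{End}(E)$-valued superforms, from which the sufficiency of conditions (1) and (2) is immediate.

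The step I expect to be the main obstacle is the converse: splitting this single equation into the two \emph{separate} vanishings claimed. The natural device is to decompose by $\Z\times\Z_2-$bidegree, using the four type-by-type targets recorded in the preceding proof, namely that $K_0^a$ contributes to $\Uplambda^{\mathrm{odd}}T^\ast M\otimes\mathrm{End}^0(E)$ and to $\Uplambda^{\mathrm{even}}T^\ast M\otimes\mathrm{End}^1(E)$, while $\widetilde{K_1}$ contributes to $\Uplambda^{\mathrm{odd}}T^\ast M\otimes\mathrm{End}^0(E)$ and to $\Uplambda^{\mathrm{even}}T^\ast M\otimes\mathrm{End}^1(E)$ as well. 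Since these ranges are not disjoint—both can reach $\mathrm{End}^1(E)$ in even form-degree and $\mathrm{End}^0(E)$ in odd form-degree $\ge 3$—the separation is not automatic, and the crux is to rule out a cancellation between the $K_0-$ and $K_1-$contributions, so that each summand must vanish on its own.
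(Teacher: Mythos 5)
Your first two paragraphs are, in substance, the paper's entire proof: the proposition is stated there without any written argument, as ``an immediate corollary of the preceding Theorem,'' and what is meant is exactly your subtraction. Since $D^{\nnabla}$ obeys \eqref{def-Quillen}, it is determined by its restriction to $\Upgamma (E)$, where the preceding proof gives $D^{\nnabla}\phi=\df^{\nabla^E}\phi+K_0^a(\ ;\phi)+\widetilde{K_1}(\ ;\phi)$; subtracting the analogous formula for $\nnabla'$ (both sets of tensors being taken with respect to the same fixed $\nabla$, $\nabla^E$, as in Proposition \ref{tensor-associated-graded-connection}) and using linearity of $K\mapsto K^a$ and $K\mapsto\widetilde{K}$ yields
$$
D^{\nnabla}=D^{\nnabla'}\ \Longleftrightarrow\ (K_0-K'_0)^a+\widetilde{K_1-K'_1}=0,
$$
so sufficiency of (1)--(2) is complete and matches the paper's intent.

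The obstacle you flag in the converse is genuine, and bidegree bookkeeping cannot remove it --- your decision not to claim the separation was the right one. By the paper's own display in the preceding proof, $\widetilde{K_1}$ involves only $B^k_{[pq]}=\frac{3}{2}\left(\Upgamma^k_{pq}-\Upgamma^k_{qp}\right)$, which is proportional to the torsion of $\nabla$: explicitly $\widetilde{K_1}(\ ;\phi)=B^r_{kq}\,\df x^k\wedge\df x^q\wedge K_1(\partial_r;\phi)$, so only the part of $B$ antisymmetric in $(k,q)$ survives the wedge. If $\nabla$ is torsion-free, then $\widetilde{K_1}\equiv 0$ for every $K_1$, condition (2) is vacuous and the converse reduces trivially to (1). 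But if $\nabla$ has torsion (precisely the freedom the paper exploits to make $K_1\mapsto\widetilde{K_1}$ surjective), cancellation can actually occur: take $L_1\in\Upgamma(T^*M\otimes\mathrm{End}^1(E))$, a legitimate $K_1$-type component $TM\to\Uplambda^0T^\ast M\otimes\mathrm{End}^1(E)$, chosen so that $S:=\widetilde{L_1}\neq 0$ in $\Upgamma(\Uplambda^2T^\ast M\otimes\mathrm{End}^1(E))$ (possible whenever some $B^r_{[kq]}\neq 0$), and, since antisymmetrization $\Upgamma(T^\ast M\otimes\Uplambda^1T^\ast M\otimes\mathrm{End}^1(E))\to\Upgamma(\Uplambda^2T^\ast M\otimes\mathrm{End}^1(E))$ is onto, an $L_0$ of $K_0$-type with $L_0^a=-S$. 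The graded connection $\nnabla'$ determined by $K'_0=K_0+L_0$, $K'_1=K_1+L_1$ then satisfies $D^{\nnabla}=D^{\nnabla'}$ while (1) and (2) both fail. Hence the statement your computation actually establishes --- and the only one that follows from the preceding theorem --- is the joint condition $(K_0-K'_0)^a+\widetilde{K_1-K'_1}=0$; the separated version printed in the paper is valid only under extra hypotheses under which the two images cannot interfere (e.g.\ torsion-free $\nabla$, or the total-degree-one situation of Example \ref{quillen-ex}, where the relevant bidegrees do not overlap).
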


There is a nice relation between the curvature of a graded connection and that of its associated Quillen
superconnection, involving the exterior differential of $M$, which is expressed in the following result.
\begin{theorem}
For any graded connection, $\nnabla$:
$$
R^{D^{\nnabla}}=\frac{1}{2}R^{\nnabla}(\df,\df ).
$$
\end{theorem}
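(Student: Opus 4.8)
The plan is to observe that, under the identifications $\fvv\simeq\mathcal{L}\simeq\oo^0((M,\om);E)$, the Quillen superconnection $D^{\nnabla}$ coincides with the contraction $\nnabla_{\df}$ of the graded connection by the derivation $\df\in\der\om$, and then to evaluate both sides of the asserted identity as operators on $\mathcal{L}$.

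First I would pin down that $D^{\nnabla}=\nnabla_{\df}$ as a $\K$-linear endomorphism of $\mathcal{L}$. By construction $D^{\nnabla}$ is the composition of the identification, the covariant graded differential $\bm{\df}^{\nnabla}$, and the insertion operator $\iota_{\df}$; applied to a $0$-superform $s=\al\phi$ it yields $\iota_{\df}\bm{\df}^{\nnabla}(s)=\langle \df;\bm{\df}^{\nnabla}(s)\rangle=\nnabla_{\df}(s)$, which is again an $E$-valued $0$-superform. Since $D^{\nnabla}$ thus maps $\oo^0((M,\om);E)$ to itself and agrees there with $\nnabla_{\df}$, it may be iterated, and
$$
R^{D^{\nnabla}}=(D^{\nnabla})^2=(\nnabla_{\df})^2 .
$$

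Next I would compute the right-hand side from the operator form of the graded curvature in Remark \ref{leibniz}, namely $R^{\nnabla}(\delta,\delta')=[\nnabla_{\delta},\nnabla_{\delta'}]-\nnabla_{[\delta,\delta']}$, taken at $\delta=\delta'=\df$. Because $\nnabla$ is even and $\df$ is odd, $\nnabla_{\df}$ is an odd operator, so the supercommutator \eqref{scommutator} of an odd element with itself gives $[\nnabla_{\df},\nnabla_{\df}]=2(\nnabla_{\df})^2$; and the second term drops out since $[\df,\df]=2\df^2=0$ forces $\nnabla_{[\df,\df]}=0$. Hence $R^{\nnabla}(\df,\df)=2(\nnabla_{\df})^2$, and combining with the previous display yields $R^{D^{\nnabla}}=(\nnabla_{\df})^2=\tfrac{1}{2}R^{\nnabla}(\df,\df)$, as claimed.

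The computations here are short; the one point deserving care is the first step, where one must verify that the three-fold composition defining $D^{\nnabla}$ genuinely returns an $E$-valued $0$-superform (so that squaring is meaningful) and that the $\Z_2$-sign carried by $\df$, namely $(-1)^{|\df||\df|}=-1$, is tracked consistently---this sign is precisely what turns the supercommutator of $\nnabla_{\df}$ with itself into twice its square and thereby produces the factor $\tfrac12$.
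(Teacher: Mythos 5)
Your proof is correct and takes essentially the same route as the paper's: both identify $D^{\nnabla}$ with $\nnabla_{\df}$ and evaluate the graded curvature at $\delta=\delta'=\df$, where $|\df|=1$ turns the first two terms into $2\nnabla_{\df}^2$ and $[\df,\df]=2\df^2=0$ kills the third. The only cosmetic difference is that you argue at the operator level via the bracket form of $R^{\nnabla}$ from Remark \ref{leibniz}, whereas the paper evaluates on $\phi\in\oo^0((M,\om);E)$ and then extends to $\oo^k((M,\om);E)$.
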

\begin{proof}
By definition, for any $\phi\in\oo^0((M,\om );E)\simeq \Upgamma (E)$,
$$
R^{\nnabla}({\rm d},{\rm d})\phi= \nnabla_{{\rm d}}\nnabla_{{\rm d}}\phi-(-1)^{|{\rm d}||{\rm d}|}\nnabla_{{\rm d}}\nnabla_{{\rm d}}\phi-\nnabla_{[{\rm d},{\rm d}]}\phi ,
$$
but $|\df |=1$ and $[\df ,\df ]=2\df^2=0$, so this gives:
$$
R^{\nnabla}({\rm d},{\rm d})\phi = 2\nnabla_{{\rm d}}^2\phi .
$$
The extension to all of $\oo^k((M,\om );E)$ is then immediate.
\end{proof}
A straightforward consequence is the following.
\begin{corollary}
If $D=D^{\nnabla}+N$, then:
$$
R^D = \frac12 R^{\nnabla}({\rm d},{\rm d})+\nnabla_{{\rm d}}N+ N^2.
$$
\end{corollary}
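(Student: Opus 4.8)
The plan is to expand the curvature $R^D=D^2=\tfrac12[D,D]$ directly, exploiting the $\K-$bilinearity of the supercommutator \eqref{scommutator} together with the hypothesis $D=D^{\nnabla}+N$. Since $D^{\nnabla}$ is a Quillen superconnection and $N\in\mathrm{End}^1(E)$, both summands have odd total degree, so the expansion
$$
[D,D]=[D^{\nnabla},D^{\nnabla}]+[D^{\nnabla},N]+[N,D^{\nnabla}]+[N,N]
$$
splits the curvature into three kinds of contributions, which I would identify one by one.

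For the diagonal term, I would invoke the preceding Theorem: $\tfrac12[D^{\nnabla},D^{\nnabla}]=(D^{\nnabla})^2=R^{D^{\nnabla}}=\tfrac12 R^{\nnabla}(\df,\df)$. Since $N$ is odd, the graded skew-symmetry of the supercommutator gives $[N,N]=NN-(-1)^{|N||N|}NN=2N^2$, whence $\tfrac12[N,N]=N^2$. For the two mixed terms, the same skew-symmetry yields $[N,D^{\nnabla}]=-(-1)^{|N||D^{\nnabla}|}[D^{\nnabla},N]=[D^{\nnabla},N]$ (both operators being odd), so that their combined contribution to $R^D$ is exactly $[D^{\nnabla},N]$.

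It then remains only to recognize $[D^{\nnabla},N]$ as $\nnabla_{\df}N$. Under the identification $\fvv\simeq\oo^0((M,\om);\mathcal{L})$ the operator $D^{\nnabla}$ acts as $\nnabla_{\df}$, while $N\in\mathrm{End}^1(E)$ acts as an algebraic (multiplication) operator; hence the graded commutator $[D^{\nnabla},N]$ is precisely the covariant derivative of the endomorphism $N$ along $\df$, namely $\nnabla_{\df}N$. This is the endomorphism-valued reading of the bracket already used in Example \ref{sexample}. Collecting the three pieces gives
$$
R^D=\tfrac12 R^{\nnabla}(\df,\df)+\nnabla_{\df}N+N^2,
$$
as asserted. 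I anticipate no real difficulty here: the only point requiring attention is the sign bookkeeping in the supercommutator, and every such sign collapses because all the operators in play are odd.
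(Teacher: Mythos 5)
Your proposal is correct and is essentially the argument the paper has in mind when it calls the corollary a ``straightforward consequence'': expand $R^D=\tfrac12[D,D]$ bilinearly, use the preceding theorem for the diagonal term $\tfrac12[D^{\nnabla},D^{\nnabla}]=\tfrac12 R^{\nnabla}(\df,\df)$, collapse the signs using the oddness of $D^{\nnabla}$ and $N$, and read $[D^{\nnabla},N]=D^{\nnabla}N+ND^{\nnabla}$ as the induced covariant derivative $\nnabla_{\df}N$ of the endomorphism $N$. Your sign bookkeeping ($[N,N]=2N^2$, $[N,D^{\nnabla}]=[D^{\nnabla},N]$) and the identification of the mixed term are all sound.
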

\begin{example}
Returning to the case presented in Example \ref{quillen-ex}, for a \emph{complex} superbundle
$E=E_0\oplus E_1\to M$, if $D$ is a Quillen superconnection given by
$D=\df^{\nabla^E}+A+L$, we have seen that it can be expressed as $D=D^{\nnabla} +A$, where $\nnabla$ is defined by:
\begin{align*}
\nnabla_{\nabla_X}\phi &=\nabla^E_X \phi +L(X;\phi )\\
\nnabla_{i_X} \phi &=0,
\end{align*}
for any $\phi\in \se)\simeq \Omega^0((M,\Omega(M));E)$, with $L\in\Omega^1 (M;\mathrm{End}^0 (E))$.
By the preceding corollary, $R^D =\frac{1}{2}R^{\nnabla} (\df ,\df )+\nnabla_\df A+A^2$.
Moreover, in the particular case of $A=0$, and $L$ of the form:
$$
L=i
\begin{pmatrix}
0 &  u^* \\
u &  0   \\
\end{pmatrix}
,
$$
with $u:E_0\to E_1$, and $u^*$ its adjoint with respect to some metric, we have $D=D^{\nnabla}$, and so $R^D=\frac{1}{2}R^{\nnabla} (\df,\df)$.
The complex scalar field $u$ is identified, in Physics, with the tachyon field condensing between a brane-antibrane system.
\end{example}

\section{Chern classes of graded vector bundles}\label{curvatures}

Throughout this Section, $(M,\se F))$ will be a split supermanifold, and $\mathcal{L}=\se F)\otimes \ges$ a graded vector bundle over it,
with $E=E_0\oplus E_1\to M$ a supervector bundle over $M$.

We want to associate to any graded connection $\nnabla$ on $\mathcal{L}$, a set of closed superforms.
As in the case of Quillen superconnections, this can be done with the aid of the supertrace: if $A$ is any
$G-$graded $\K-$algebra, a supertrace on $A$ is any $\K-$linear form on $A$ such that it vanishes on the supercommutator \eqref{scommutator}.
The supertrace is not unique, in general, but for endomorphisms of a supervector bundle, $A=\mathrm{End}(E)$, a canonical
supertrace $\str$ can be defined as follows: any $T\in\mathrm{End}(E)$ decomposes as
$T=T_0\oplus T_1\in\mathrm{End}^0(E)\oplus\mathrm{End}^1(E)$, with a further decomposition
$T_0=T^0_0\oplus T^1_0\in(E^*_0\otimes E_0)\oplus(E^*_1\otimes E_1)$. Then:
$$
\str (T)=
\begin{cases}
\mathrm{Tr}(T^0_0)-\mathrm{Tr}(T^1_0) \mbox{, if }T=T_0 \\[4mm]
0 \mbox{, if }T=T_1
\end{cases}.
$$
This linear form on the fibers of the supervector bundle $\mathrm{End}(E)$, can be seen as a $\oo^0(M,\se F))-$module morphism
$\oo^0((M,\se F));\mathrm{End}(E))\to\oo^0(M,\se F))$, which can be extended to a $\oo(M,\se F))-$module morphism:
$$
\str:\oo((M,\se F));\mathrm{End}(E))\to \oo(M,\se F)) .
$$
That property of $\oo(M,\se F))-$linearity, guarantees that we can take the supertrace of $[\bm{\df}^{\nnabla},Q]$ (recall Example \ref{sexample}).
\begin{proposition}\label{ellema}
For $\nnabla$ any graded connection on $\mathcal{L}=\se F)\otimes \ges$, and $Q\in\oo^k((M,\se F));\mathrm{End}(E))$, the
following holds:
$$
\str [\bm{\df}^{\nnabla},Q]=\bm{\df}\str(Q) .
$$
\end{proposition}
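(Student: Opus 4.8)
The plan is to reduce the identity to two independent facts: that the supertrace kills the purely algebraic (zeroth-order) part of $\bm{\df}^{\nnabla}$, and that it intertwines the bare graded differential with itself. First I would observe that both sides are globally defined elements of $\oo(M,\se F))$: the left-hand side makes sense precisely because Example \ref{sexample} guarantees that $[\bm{\df}^{\nnabla},Q]$ is $\oo(M,\se F))$-linear, hence, via Proposition \ref{oolin}, an element of $\oo((M,\se F));\mathrm{End}(E))$ on which $\str$ is defined. Since the statement is an equality of global superforms, I may verify it in a chart, and there invoke the local decomposition \eqref{decomp}, $\bm{\df}^{\nnabla}=\bm{\df}+P$ with $P\in\oo^{(1,+)}((M,\se F));\mathrm{End}(E))$, to split
$$
\str[\bm{\df}^{\nnabla},Q]=\str[\bm{\df},Q]+\str[P,Q].
$$

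Next I would show the second term vanishes. The bracket $[P,Q]$ is the graded commutator of two operators both lying in the subalgebra $\oo((M,\se F));\mathrm{End}(E))\subset\mathrm{End}\,\oo((M,\se F));E)$, so under the algebra isomorphism of Proposition \ref{oolin} it is the supercommutator \emph{inside} $\oo((M,\se F));\mathrm{End}(E))\simeq\oo(M,\se F))\otimes\mathrm{End}(E)$. Testing on decomposables $P=\mu\otimes S$ and $Q=\lambda\otimes T$, I would check that the extended supertrace annihilates such supercommutators: this follows by combining the cyclicity of $\str$ on $\mathrm{End}(E)$, namely $\str(ST)=(-1)^{|S||T|}\str(TS)$, with the graded commutativity $\mu\wedge\lambda=(-1)^{|\mu||\lambda|}\lambda\wedge\mu$ of $\oo(M,\se F))$, so that the two Koszul signs conspire against the sign in the definition of the commutator. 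Hence $\str[P,Q]=0$.

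For the first term I would reduce, by $\oo(M,\se F))$-bilinearity, to a decomposable $Q=\lambda\otimes T$ with $\lambda\in\oo(M,\se F))$ and $T\in\Upgamma(\mathrm{End}(E))$. In a local frame of $E$ the operator $\bm{\df}$ occurring in \eqref{decomp} simply differentiates the components of a section and acts trivially on the frame, i.e. it is the flat covariant graded differential of that frame; consequently $[\bm{\df},Q]$ acts as $\bm{\df}$ on the scalar factor $\lambda$ plus $\lambda$ times the entrywise derivative of $T$. Since a flat connection merely differentiates matrix entries and $\str$ is the fixed linear combination of diagonal entries, taking supertrace commutes with this differentiation, $\str(\bm{\df}T)=\bm{\df}\,\str(T)$, and the $\oo(M,\se F))$-linearity of $\str$ then gives
$$
\str[\bm{\df},Q]=\bm{\df}\lambda\,\str(T)+(-1)^{|\lambda|}\lambda\wedge\bm{\df}\,\str(T)=\bm{\df}\bigl(\lambda\,\str(T)\bigr)=\bm{\df}\,\str(Q).
$$
Combining this with $\str[P,Q]=0$ yields the asserted identity locally, hence globally.

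I expect the main obstacle to be the sign bookkeeping, concentrated in the proof that the extended supertrace vanishes on supercommutators. The algebra $\oo((M,\se F));\mathrm{End}(E))$ carries a $\Z\times\Z_2$-bigrading, so both the Koszul sign in the graded commutator and the one appearing when reordering the factors of a product mix the form parity with the $\Z_2$-degree of the endomorphisms; one must check that these exactly cancel against the cyclicity sign of $\str$ on $\mathrm{End}(E)$. Once that verification is in place, everything else is a routine, if lengthy, local computation.
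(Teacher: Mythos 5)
Your proposal is correct and follows essentially the same route as the paper's proof: working locally with the decomposition $\bm{\df}^{\nnabla}=\bm{\df}+P$ from \eqref{decomp}, killing $\str[P,Q]$ because the supertrace vanishes on supercommutators, and commuting $\str$ with $\bm{\df}$ on decomposables $Q=\lambda\otimes T$. You merely spell out two details the paper leaves implicit---the entrywise-derivative term $\lambda\,\bm{\df}T$ and the cancellation between the cyclicity sign of $\str$ and the Koszul sign from reordering form factors---which is a welcome, not a divergent, elaboration.
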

\begin{proof}
Working locally, by the decomposition \eqref{decomp}, we can write $\bm{\df}^{\nnabla}=\bm{\df}+P$. Put also
$Q=\alpha\, T$, where $\alpha\in\oo^k (M,\Upgamma (\Uplambda F))$ and $T\in\Upgamma (\mathrm{End}E)$. Then, as $\str$ vanishes on supercommutators:
$$
\str [\bm{\df}^{\nnabla},Q]=\str [\bm{\df}+P,\alpha\, T]=\str [\bm{\df},\alpha\, T]+\str [P,Q]=\str (\bm{\df}\alpha\, T).
$$
Finally, again because of the $\oo(M,\se F))-$linearity, $\bm{\df}$ commutes with
$\str$, giving the statement.
\end{proof}
\begin{definition}
Let $\nnabla$ be a graded connection on the graded vector bundle $\mathcal{L}=\se F)\otimes \ges$ over the supermanifold
$(M,\se F))$. For any $k\in\N$, the characteristic superform (or Chern superform) of $\nnabla$, of degre $2k$, is the
superform on $(M,\se F))$ given by $\str \left(\bm{\df}^{\nnabla^{2k}}\right)$. For the sake of simplicity, sometimes it will be
denoted by $\str (\nnabla^{2k})$.
\end{definition}

The next result is proved following the ideas presented in \cite{Qui}, but with some modifications to take into account that
we are now dealing with vector-valued superforms (not just forms), and that the natural grading in this context is given by the
$\Z\times\Z_2-$degree, not the total degree.
\begin{theorem}
Let $\nnabla$ be a graded connection on the graded vector bundle $\mathcal{L}=\se F)\otimes \ges$ over $(M,\se F))$, and let $k\in\N$.
Then:
\begin{enumerate}[(1)]
\item The characteristic forms $\str (\nnabla^{2k})$ are closed even superforms.
\item If $\nnabla_0$ and $\nnabla_1$ are two graded connections on $\mathcal{L}$, then the superforms $\str (\nnabla^{2k}_0)$
and $\str (\nnabla^{2k}_1)$ lie in the same graded De Rham cohomology class.
\end{enumerate}
\end{theorem}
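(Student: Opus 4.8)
The plan is to run the classical Chern--Weil argument, transcribed to the $\Z\times\Z_2-$bigraded setting, using Proposition \ref{ellema} as the substitute for the statement ``$\bm{\df}$ commutes with the supertrace''. First I would record two algebraic facts about the curvature operator $R^{\nnabla}=(\bm{\df}^{\nnabla})^2=\tfrac12[\bm{\df}^{\nnabla},\bm{\df}^{\nnabla}]$. Applying the identity \eqref{eq1b} exactly as was done in Section \ref{sec3} for Quillen curvatures, and using that $\bm{\df}^{\nnabla}$ induces $\bm{\df}$ on superforms, one gets $[R^{\nnabla},\al]=[\bm{\df}^{\nnabla},[\bm{\df}^{\nnabla},\al]]=[\bm{\df}^{\nnabla},\bm{\df}\al]=\bm{\df}^2\al=0$ for every superfunction $\al\in\se F)$. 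Hence $R^{\nnabla}$ is $\oo(M,\se F))-$linear, so by Proposition \ref{oolin} it is an $\mathrm{End}(E)-$valued superform, and therefore so is each power $(R^{\nnabla})^{j}=(\bm{\df}^{\nnabla})^{2j}$ (the $\oo(M,\se F))-$linear operators form an algebra under composition). The same Jacobi-type computation yields the Bianchi relation $[\bm{\df}^{\nnabla},R^{\nnabla}]=0$. Finally, $(\bm{\df}^{\nnabla})^{2k}$ has $\Z\times\Z_2-$bidegree $(2k,0)$, so $\str(\nnabla^{2k})=\str\big((\bm{\df}^{\nnabla})^{2k}\big)$ is an \emph{even} superform, establishing that part of the statement.

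For item (1) I would simply apply Proposition \ref{ellema} to $Q=(\bm{\df}^{\nnabla})^{2k}$, which is legitimate precisely because $Q$ is $\oo(M,\se F))-$linear by the previous paragraph. This gives $\bm{\df}\,\str\big((\bm{\df}^{\nnabla})^{2k}\big)=\str[\bm{\df}^{\nnabla},(\bm{\df}^{\nnabla})^{2k}]$. Since $(\bm{\df}^{\nnabla})^{2k}$ has even total parity and commutes with $\bm{\df}^{\nnabla}$, the graded commutator on the right is $\bm{\df}^{\nnabla}(\bm{\df}^{\nnabla})^{2k}-(\bm{\df}^{\nnabla})^{2k}\bm{\df}^{\nnabla}=0$. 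Hence $\bm{\df}\,\str(\nnabla^{2k})=0$ and the characteristic superform is closed.

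For item (2) I would use a transgression (homotopy) argument. By the local decomposition \eqref{decomp}, the difference $\theta:=\bm{\df}^{\nnabla_1}-\bm{\df}^{\nnabla_0}$ is an $\mathrm{End}(E)-$valued superform of bidegree $(1,+)$, hence of odd total parity; thus the line $\nnabla_t:=\nnabla_0+t\theta$, $t\in[0,1]$, is a family of graded connections and $D_t:=\bm{\df}^{\nnabla_t}=\bm{\df}^{\nnabla_0}+t\theta$ depends polynomially on $t$. Then $\dot D_t=\theta$ and, $\theta$ sharing the odd parity of $D_t$, the curvature obeys $\dot R_t=\dot D_t\,D_t+D_t\,\dot D_t=[D_t,\theta]$ (the anticommutator). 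Differentiating and using the cyclicity of $\str$ (valid because $\str$ vanishes on supercommutators, with $R_t$ even so no signs intervene), I get $\ddt\,\str(R_t^{\,k})=k\,\str\big([D_t,\theta]\,R_t^{\,k-1}\big)$. Invoking the Bianchi identity $[D_t,R_t^{\,k-1}]=0$ and the graded Leibniz rule to rewrite $[D_t,\theta]R_t^{\,k-1}=[D_t,\theta R_t^{\,k-1}]$, and then Proposition \ref{ellema}, this equals $k\,\bm{\df}\,\str\big(\theta R_t^{\,k-1}\big)$. Integrating over $t\in[0,1]$ gives $\str(\nnabla_1^{2k})-\str(\nnabla_0^{2k})=\bm{\df}\Big(k\int_0^1\str\big(\theta R_t^{\,k-1}\big)\,\df t\Big)$, an exact superform, so the two characteristic forms represent the same graded De Rham class.

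The main obstacle I expect is the sign bookkeeping forced by the $\Z\times\Z_2-$grading rather than a single total degree: one must verify that $\str$ is genuinely cyclic (up to the correct signs) on products of $\mathrm{End}(E)-$valued superforms, that the identity $\dot R_t=[D_t,\theta]$ really carries the anticommutator sign, and that Bianchi lets $R_t^{\,k-1}$ pass across $D_t$ without a stray sign. Once these graded signs are pinned down the computation is purely the standard Chern--Weil one; in particular, since $D_t$ is affine and every $R_t^{\,k}$ is polynomial in $t$, the differentiation and integration above are entirely formal and raise no analytic issue.
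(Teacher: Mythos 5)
Your proposal is correct and follows essentially the same route as the paper's own proof: closedness via Proposition \ref{ellema} combined with the Bianchi-type identity $[\bm{\df}^{\nnabla},(\bm{\df}^{\nnabla})^{2k}]=0$, and the cohomology statement via the affine homotopy $\nnabla_t$, the derivative formula $\tfrac{\df}{\df t}(\bm{\df}^{\nnabla_t})^2=[\bm{\df}^{\nnabla_t},\dot{D}_t]$, and integration of the resulting transgression term. Your preliminary verification that $(\bm{\df}^{\nnabla})^{2}$ (hence each power) is $\oo(M,\se F))$-linear, so that Proposition \ref{ellema} legitimately applies, merely makes explicit a point the paper leaves implicit.
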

\begin{proof}
\mbox{}
\begin{enumerate}[(1)]
\item By applying proposition \ref{ellema}, we get:
$$
\bm{\df}\str (\nnabla^{2k})=\str \left[ \bm{\df}^{\nnabla},\bm{\df}^{\nnabla^{2k}}\right]
=\str \left[ \bm{\df}^{\nnabla},\left( \bm{\df}^{\nnabla^2}\right)^k\right] =0,
$$
because $\bm{\df}^{\nnabla^{2k}}$ has bidegree $(2k,0)$, so
\begin{equation}\label{bianchi}
\left[ \bm{\df}^{\nnabla},\bm{\df}^{\nnabla^{2k}}\right] =\left( \bm{\df}^{\nnabla}\right)^{2k+1}-\left( \bm{\df}^{\nnabla}\right)^{2k+1}=0
\end{equation}
(this can be thought of as a form of Bianchi identity).
\item Given $\nnabla_0$ and $\nnabla_1$, define for $t\in [0,1]$ the family of graded connections $\nnabla_t =(1-t)\nnabla_0 +t\nnabla_1$. Then:
$$
\bm{\df}^{\nnabla_t}=(1-t)\bm{\df}^{\nnabla_0} +t\bm{\df}^{\nnabla_1} =\bm{\df}^{\nnabla_0} +tP,
$$
with $P\in\oo^{(1,+)}((M,\se F));\mathrm{End}(E))$. It is immediate that
\begin{align*}
\frac{\df}{\df t}\left( \bm{\df}^{\nnabla_t}\right)^2
&=\frac{\df}{\df t}\left( \frac{1}{2}\left[\bm{\df}^{\nnabla_t},\bm{\df}^{\nnabla_t}\right]\right) \\
&=\frac{\df}{\df t}\left( \frac{1}{2}\left[\bm{\df}^{\nnabla_0} +tP,\bm{\df}^{\nnabla_0} +tP\right] \right)\\
&=\left[\bm{\df}^{\nnabla_0} +tP,P\right] \\
&=\left[\bm{\df}^{ \nnabla_t},\frac{\df}{\df t}\bm{\df}^{\nnabla_t} \right],
\end{align*}
so, substituting this identity, using \eqref{bianchi} for $\bm{\df}^{\nnabla_t}$, and Proposition \ref{ellema}:
\begin{align*}
\frac{\df}{\df t}\str\left( \bm{\df}^{\nnabla^{2}_t}\right)^k
&=\str\left( k \left( \bm{\df}^{\nnabla^{2}_t}\right)^{k-1} \frac{\df}{\df t} \bm{\df}^{\nnabla^2_t} \right) \\
&=\str\left( k\left[ \bm{\df}^{\nnabla_t},\left( \bm{\df}^{\nnabla^2_t}\right)^{k-1}\frac{\df}{\df t} \bm{\df}^{\nnabla_t}\right] \right)\\
&=\bm{\df}\str \left( k \left( \bm{\df}^{\nnabla^2_t}\right)^{k-1} \frac{\df}{\df t} \bm{\df}^{\nnabla_t} \right)\\
&=\bm{\df}\str \left( k \left( \bm{\df}^{\nnabla^2_t}\right)^{k-1} P \right).
\end{align*}
Finally, integrating on the interval $[0,1]$, we get:
$$
\str\left( \bm{\df}^{\nnabla_1}\right)^{2k}-\str\left( \bm{\df}^{\nnabla_0}\right)^{2k}=
\bm{\df}\int^1_0\str \left( k\left( \bm{\df}^{\nnabla^2_t}\right)^{k-1}P\right)\df\, t.
$$
Thus, $\str( \nnabla_0^{2k})$ and $\str( \nnabla_1^{2k})$ lie in the same graded De Rham cohomology class.
\end{enumerate}
\end{proof}
\begin{definition}
For any $k\in\N$, the Chern class of degree $2k$ of the graded vector bundle $\mathcal{L}$ over the supermanifold $(M,\se F))$, denoted
$\mathrm{ch}_k(\mathcal{L})$, is the cohomology class of any superform $\str (\nnabla^{2k})$ (with $\nnabla$ any
graded connection on $\mathcal{L}$).
\end{definition}

On any supermanifold $(M,\mathcal{A})$, the structural morphism $\mathcal{A}\stackrel{\sim}{\rightarrow}\cinfm$ gives rise to
another morphism $\kappa :\Omega (M,\mathcal{A})\to \Omega (M)$, which, in turn, induces an \emph{isomorphism}
between the De Rham cohomologies of $(M,\mathcal{A})$ and $M$ (see \cite{Kos}, sec. 4.6). In the case of a split supermanifold
$(M,\se F))$, the structural morphism $\sim$ is just the projection onto the $\mathbb{Z}-$degree $0$, so
the computations can be easily done.
\begin{theorem}\label{lastthm}
The Chern classes of the supervector bundle $E=E_0\oplus E_1\to M$, over the manifold $M$, and those of the graded vector
bundle $\mathcal{L}=\se F)\otimes \ges$, over the supermanifold $(M,\se F))$, are the same.
\end{theorem}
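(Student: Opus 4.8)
The plan is to use that both families of classes are connection-independent---for $\mathcal{L}$ this is the theorem just proved, and for the supervector bundle $E$ it is the standard Chern--Weil property of the super-Chern character computed through the supertrace---so that each side can be evaluated with a single convenient connection and then compared through the cohomology isomorphism $\kappa_\ast\colon H_{dR}(M,\om)\xrightarrow{\sim}H_{dR}(M)$ induced by the structural morphism. I would fix an ordinary $\Z_2$-grading-preserving connection $\nabla^E$ on $E=E_0\oplus E_1$ and take the graded connection $\nnabla$ on $\mathcal{L}=\om\otimes\ges$ with $K_0=K_1=0$ in the notation of Proposition \ref{tensor-associated-graded-connection}. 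For this choice the induced Quillen superconnection is $D^{\nnabla}=\df^{\nabla^E}$, so its curvature is the ordinary curvature $R^{D^{\nnabla}}=(\df^{\nabla^E})^2=R^{\nabla^E}$, and the degree-$2k$ component of the super-Chern character of $E$ is represented by the ordinary Chern--Weil form $\str\bigl((R^{\nabla^E})^k\bigr)$.

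On the supermanifold side, $\mathrm{ch}_k(\mathcal{L})$ is represented by the closed characteristic superform $\str(\nnabla^{2k})=\str\bigl((\bm{\df}^{\nnabla})^{2k}\bigr)$ for the same $\nnabla$. The core of the argument is to compute $\kappa$ on this representative. Since $(M,\om)$ is split, the structural morphism $\sim$ is the projection onto $\Z$-degree $0$, and I would realize $\kappa$ on characteristic superforms by saturating the $2k$-superform $\str\bigl((R^{\nnabla})^k\bigr)$ on the distinguished supervector field $\df\in\der\om$ and projecting the remaining $\om$-coefficients to degree $0$. The engine of this computation is the curvature relation proved above in the form $R^{\nnabla}(\df,\df)=2R^{D^{\nnabla}}$: evaluating each of the $k$ curvature factors on the pair $(\df,\df)$ turns $\str\bigl((R^{\nnabla})^k\bigr)$ into $\str\bigl((R^{D^{\nnabla}})^k\bigr)=\str\bigl((R^{\nabla^E})^k\bigr)$, up to the combinatorial and sign factors carried by the superform degree.

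Combining the two computations, $\kappa_\ast$ carries $\mathrm{ch}_k(\mathcal{L})$ to the ordinary class $\bigl[\str((R^{\nabla^E})^k)\bigr]=\mathrm{ch}_k(E)$, which is the asserted equality (and, by the independence results, is insensitive to the auxiliary $\nabla^E$). I expect the main obstacle to be the second paragraph: making precise how $\kappa$ acts on a characteristic \emph{super}form rather than on a superfunction, and checking that the body-restriction really does return the ordinary Chern--Weil form. This forces one to separate the two $\Z$-gradings in play---the superform degree and the internal degree of the $\om$-coefficients---to keep the intermediate expressions closed via Proposition \ref{ellema} so that the identity descends to cohomology, and to track the numerical factors generated by the repeated use of the curvature relation.
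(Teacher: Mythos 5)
Your overall architecture coincides with the paper's: both arguments fix a grading-preserving connection $\nabla^E$, take the graded connection with $K_0=K_1=0$ (so that $D^{\nnabla}=\df^{\nabla^E}$), invoke connection-independence on both sides, and transport the graded class through the isomorphism induced by $\kappa$. The gap sits exactly where you predicted it: your realization of $\kappa$ as ``saturating the superform on $\df$ and projecting the $\om$-coefficients to degree $0$'' is asserted, not proved, and as literally stated it is not $\kappa$. Contraction with $\df$ is the mechanism that \emph{defines} $D^{\nnabla}$ in Section \ref{sec6}; Kostant's $\kappa$ is induced by the structural morphism and is computed by evaluating a $k$-superform on derivations lifting ordinary vector fields and taking the $\Z$-degree-zero part of the resulting coefficient. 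Already on exact $1$-superforms the two operations disagree: contracting $\bm{\df}\lambda$ with $\df$ gives $\df\lambda$, which carries all form degrees, whereas $\kappa(\bm{\df}\lambda)=\df\lambda_0$. So to make your recipe rigorous you must extract the lowest (degree-$2k$) homogeneous component rather than ``project to degree $0$'', fix the combinatorial factor produced by inserting the single odd derivation $\df$ into all $2k$ slots, and, crucially, show that the algebraic part of the decomposition $\df = dx^k\wedge\bigl(\nabla_{\partial_k}+i_{\nabla\partial_k+T(\partial_k,\cdot)}\bigr)$ contributes only to strictly higher coefficient degrees. None of this is carried out in your sketch, and it is precisely the content of the theorem, so the proof as written is incomplete.

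The paper's proof avoids the issue entirely. With $\nnabla_{\nabla_X}\phi=\nabla^E_X\phi$ and $\nnabla_{i_X}\phi=0$, it evaluates $\kappa$ directly on the covariant differential: $\kappa(\bm{\df}^{\nnabla}\phi)=\nabla^E\phi=\df^{\nabla^E}\phi$, i.e.\ for this particular pair of connections $\kappa$ intertwines the graded covariant differential with the ordinary one, hence carries $\str\bigl((\bm{\df}^{\nnabla})^{2k}\bigr)$ to $\str\bigl((\df^{\nabla^E})^{2k}\bigr)$ on the nose; the $i_X$-corrections that burden your computation vanish identically because $\nnabla_{i_X}=0$ (for instance $R^{\nnabla}(\nabla_X,\nabla_Y)=R^{\nabla^E}(X,Y)$ while $R^{\nnabla}(\nabla_X,i_Y)=0=R^{\nnabla}(i_X,i_Y)$). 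Your identity $\tfrac12 R^{\nnabla}(\df,\df)=R^{D^{\nnabla}}=R^{\nabla^E}$ is true and consistent with all of this, but it computes the curvature of the \emph{induced Quillen superconnection}, not the image of the Chern superform under $\kappa$; to close the argument you must either justify the $\df$-saturation model of $\kappa$ along the lines indicated above, or simply compute $\kappa$ on lifted vector fields as the paper does.
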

\begin{proof}
Consider $\nabla^E$, a degree-preserving connection on $E$, and $\nabla$ a linear connection on $TM$.
As the Chern classes are independent of the connection, we will choose the superconnection
$D=\mathrm{d}^{\nabla^E}$ to compute those of $E$, and the graded connection given by
$$
\begin{cases}
\nnabla_{\nabla_X}\phi = \nabla^E_X \phi \\
\nnabla_{i_X}\phi =0
\end{cases},
$$
to compute those of $\mathcal{L}$. Then, it is clear that
$$
\kappa (\bm{\df}^{\nnabla} \phi)=\nabla^E \phi =\df^{\nabla^E}\phi,
$$
so the statement follows.
\end{proof}
The Chern classes of some graded vector bundles of interest can be computed directly from this result.
\begin{corollary}
The Chern classes of the graded vector bundle $\mathcal{L}=\se F)\otimes \ges$ over $(M,\se F))$, such that $E=E_0\oplus E_1\to M$ is
a supervector bundle, are:
$$
\mathrm{ch}_k(\mathcal{L})=\mathrm{ch}_k(E_0)-\mathrm{ch}_k(E_1).
$$
\end{corollary}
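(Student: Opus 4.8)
The plan is to reduce everything, via Theorem \ref{lastthm}, to the computation of the Chern classes of the supervector bundle $E=E_0\oplus E_1$ over $M$, and then to evaluate those classes using a carefully chosen Quillen superconnection. Since Theorem \ref{lastthm} already identifies $\mathrm{ch}_k(\mathcal{L})$ with the Chern classes of $E$, it suffices to prove that $\mathrm{ch}_k(E)=\mathrm{ch}_k(E_0)-\mathrm{ch}_k(E_1)$. As the Chern classes are independent of the chosen (super)connection, I am free to pick the most convenient one.

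First I would choose connections $\nabla^{E_0}$ on $E_0$ and $\nabla^{E_1}$ on $E_1$, and form the grading-preserving connection $\nabla^E=\nabla^{E_0}\oplus\nabla^{E_1}$ on $E$. Taking the superconnection $D=\df^{\nabla^E}$ (a degree-preserving derivation of $\fvv$ of total degree $1$, as in Example \ref{example-quillen} with $P=0$), its curvature $R^D=D^2$ is block-diagonal with respect to the splitting $E=E_0\oplus E_1$, precisely because $D$ preserves the grading. Explicitly, $R^D=R^{\nabla^{E_0}}\oplus R^{\nabla^{E_1}}$, where each summand is the ordinary curvature of the corresponding connection, valued in $\mathrm{End}(E_i)$.

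Next I would compute the characteristic forms. Because $R^D$ is an even endomorphism (its odd part vanishes), the canonical supertrace splits as a difference of ordinary traces: for each $k$, one gets $\str\left((R^D)^k\right)=\mathrm{Tr}\left((R^{\nabla^{E_0}})^k\right)-\mathrm{Tr}\left((R^{\nabla^{E_1}})^k\right)$. The key observation is that $\mathrm{Tr}\left((R^{\nabla^{E_i}})^k\right)$ is exactly the degree-$2k$ Chern character form of $E_i$, with the normalization constants carried through identically on both blocks since $\str$ is assembled fiberwise from the ordinary traces. Passing to cohomology classes then yields $\mathrm{ch}_k(E)=\mathrm{ch}_k(E_0)-\mathrm{ch}_k(E_1)$, and combining with Theorem \ref{lastthm} finishes the argument.

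The only point requiring care — and really the heart of the matter — is the vanishing of the odd contribution in the supertrace. This is exactly why the grading-preserving connection is the right gauge: for a generic superconnection the curvature has an odd part on which $\str$ still vanishes but which complicates the bookkeeping, whereas here the block-diagonal form of $R^D$ makes the $(-1)$ weighting of the $E_1$-block in $\str$ produce the announced difference directly. Everything else is the standard Chern–Weil manipulation already carried out in the preceding theorem.
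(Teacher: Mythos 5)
Your proposal is correct, and its skeleton is the same as the paper's: both arguments pass through Theorem \ref{lastthm} to trade $\mathrm{ch}_k(\mathcal{L})$ for the Chern classes of the superbundle $E$ computed on the base manifold $M$. The difference is what happens after that reduction: the paper's proof is a one-line citation --- the identity $\mathrm{ch}_k(E)=\mathrm{ch}_k(E_0)-\mathrm{ch}_k(E_1)$ for Quillen superconnections is simply quoted from \cite{Qui} --- whereas you actually prove that identity, by choosing the grading-preserving connection $\nabla^{E_0}\oplus\nabla^{E_1}$, observing that $R^D$ is then block-diagonal and even, and splitting the supertrace as $\str\bigl((R^D)^k\bigr)=\mathrm{Tr}\bigl((R^{\nabla^{E_0}})^k\bigr)-\mathrm{Tr}\bigl((R^{\nabla^{E_1}})^k\bigr)$ at the level of forms. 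What your route buys is self-containedness and a visible origin for the minus sign. One small sharpening: you do not even need to invoke independence of the super-Chern forms on the choice of superconnection (which is Quillen's nontrivial input), because the proof of Theorem \ref{lastthm} already works in the gauge $D=\df^{\nabla^E}$ with $\nabla^E$ degree-preserving, and your $\nabla^{E_0}\oplus\nabla^{E_1}$ is precisely such a choice; from there only classical Chern--Weil independence for $E_0$ and $E_1$ separately is needed. Also, your parenthetical remark that for a generic superconnection the odd part of the curvature is merely killed by $\str$ is slightly loose: odd components of $R^D$ contribute to the \emph{even} part of $(R^D)^k$ through quadratic terms, so the difference-of-traces identity genuinely fails at the form level in general and holds only in cohomology --- but since your block-diagonal choice sidesteps this entirely, it does not affect the validity of your proof.
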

\begin{proof}
The statement is true for Quillen superconnections (see \cite{Qui}). So it is true for graded vector bundles, by Theorem \ref{lastthm}.
\end{proof}
Recall that the supertangent bundle of a split supermanifold $(M,\se F))$ is given by the sheaves of derivations
$\der(\se F))$, and that we have the isomorphism $\der\se F)\simeq \se F) \otimes \Gamma(TM\oplus F^*)$ (see Examples \ref{stangent}
and \ref{taniso}). The supercotangent structural sheaf is then isomorphic to the dual sheaf
$\der^* \se F)\simeq \se F) \otimes \Gamma(T^* M\oplus F)$. This, together with the fact that usual Chern classes
satisfy the duality property $\mathrm{ch}_k(F^*)=(-1)^k\, \mathrm{ch}_k(F)$ for any vector bundle $F$, imply an analogous result
for the supertangent and supercotangent bundles.
\begin{corollary}
For any split supermanifold $(M,\se F))$, the Chern classes of the supertangent bundle satisfy
$$
\mathrm{ch}_k(\der(\se F)))=\mathrm{ch}_k(TM)-\mathrm{ch}_k(F^*),
$$
and those of the cotangent superbundle
$$
\mathrm{ch}_k(\der^* (\se F)))=\mathrm{ch}_k(T^* M)-\mathrm{ch}_k(F)=(-1)^k\, \mathrm{ch}_k(\der(\se F))).
$$
\end{corollary}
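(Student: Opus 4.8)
The plan is to reduce the statement to the preceding Corollary by identifying the supervector bundle attached to the supertangent sheaf and then reading off its $\Z_2-$grading. First I would invoke Examples \ref{stangent} and \ref{taniso}: once a linear connection on $F$ is fixed, Theorem \ref{thm1} yields the isomorphism $\der\se F)\simeq\se F)\otimes\Upgamma(TM\oplus F^*)$, so by Theorem \ref{superbundles} the supertangent bundle is exactly the graded vector bundle $\mathcal{L}=\se F)\otimes\ges$ whose associated supervector bundle is $E=TM\oplus F^*$.

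The key step is to pin down which summand of $E$ is even and which is odd. Here I would appeal to the degree bookkeeping of Example \ref{ex3}: the derivations $\nabla_X$ coming from the $TM-$factor have $\Z-$degree $0$ (item \ref{itc} with a constant $\Uplambda F-$component), hence are even, while the insertion operators $i_\beta$ coming from the $F^*-$factor lie in $\der^{-1}$ (item \ref{itd} with $b=0$), hence are odd. Thus $E_0=TM$ and $E_1=F^*$, and the preceding Corollary gives at once
$$
\mathrm{ch}_k(\der\se F))=\mathrm{ch}_k(E_0)-\mathrm{ch}_k(E_1)=\mathrm{ch}_k(TM)-\mathrm{ch}_k(F^*),
$$
which is the first assertion.

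For the cotangent superbundle I would dualize, using $\der^*\se F)\simeq\se F)\otimes\Upgamma(T^*M\oplus F)$; since dualizing preserves parity, the even and odd parts of the associated supervector bundle are $T^*M$ and $F$, so the same Corollary yields $\mathrm{ch}_k(\der^*\se F))=\mathrm{ch}_k(T^*M)-\mathrm{ch}_k(F)$. Finally, to extract the factor $(-1)^k$ I would feed in the duality property $\mathrm{ch}_k(V^*)=(-1)^k\,\mathrm{ch}_k(V)$ for ordinary bundles, applied to $V=TM$ and to $V=F^*$ (with $F=(F^*)^*$), giving $\mathrm{ch}_k(T^*M)=(-1)^k\mathrm{ch}_k(TM)$ and $\mathrm{ch}_k(F)=(-1)^k\mathrm{ch}_k(F^*)$; substituting these into $\mathrm{ch}_k(T^*M)-\mathrm{ch}_k(F)$ factors out $(-1)^k$ and reproduces $(-1)^k\,\mathrm{ch}_k(\der\se F))$, which is the second assertion.

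The only genuinely substantive point, and the one I would be most careful about, is the parity identification in the second paragraph: verifying that under the isomorphism of Example \ref{taniso} the $TM-$summand carries even parity and the $F^*-$summand odd parity. Everything else is a formal substitution into the already-established Corollary, combined with the stated duality of ordinary Chern characters.
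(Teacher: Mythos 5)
Your proposal is correct and follows essentially the same route as the paper, which derives the corollary from the isomorphisms $\der\se F)\simeq\se F)\otimes\Upgamma(TM\oplus F^*)$ and $\der^*\se F)\simeq\se F)\otimes\Upgamma(T^*M\oplus F)$, the preceding corollary $\mathrm{ch}_k(\mathcal{L})=\mathrm{ch}_k(E_0)-\mathrm{ch}_k(E_1)$, and the classical duality $\mathrm{ch}_k(V^*)=(-1)^k\,\mathrm{ch}_k(V)$. If anything, you are more careful than the paper: your explicit verification that $TM$ carries even parity (degree-$0$ derivations $\nabla_X$) and $F^*$ odd parity (degree-$(-1)$ insertions $i_\beta$) is left implicit in the paper's argument.
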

\begin{example}
The Chern classes of both, the supertangent and the supercotangent bundle of the Cartan-Koszul supermanifold, all vanish.
\end{example}
In the non-graded case, Chern classes of degree greater than the dimension of the base manifold vanish, as a consequence
of the existence of a top degree in the De Rham cohomology, given precisely by $\dim M$. For supermanifolds, it is a well-known
fact that this top degree does not exist (see \cite{Kos}) so, \emph{a priori}, they do not necessarily vanish. However, it is a
direct consequence of Theorem \ref{lastthm} that the classical result is still valid in the super context.
\begin{corollary}
The Chern classes of degree $2k$ of the graded vector bundle $\mathcal{L}=\se F)\otimes \ges$ over $(M,\se F))$, such that
$2k>\dim M$, are zero.
\end{corollary}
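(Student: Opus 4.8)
The plan is to deduce this entirely from Theorem \ref{lastthm}, which transports the computation back to the ordinary base manifold $M$, where the classical degree bound is available. The essential observation is that, even though the De Rham cohomology of the supermanifold $(M,\se F))$ has no top degree (so a naive dimension argument on the superforms would fail), the structural morphism $\kappa:\Omega(M,\se F))\to\om$ induces an \emph{isomorphism} on De Rham cohomology. Consequently every cohomological obstruction on $(M,\se F))$ is ultimately controlled by $\dim M$.

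First I would invoke the preceding Corollary to write $\mathrm{ch}_k(\mathcal{L})=\mathrm{ch}_k(E_0)-\mathrm{ch}_k(E_1)$, thereby reducing the graded Chern class to a difference of ordinary Chern characters of the honest vector bundles $E_0,E_1\to M$; equivalently, one may cite Theorem \ref{lastthm} directly, identifying the Chern classes of $\mathcal{L}$ with those of the supervector bundle $E$ over $M$. The point of this first step is to replace a statement about graded De Rham cohomology on a supermanifold by an equivalent statement about classical De Rham cohomology on the ordinary manifold $M$.

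Second, I would observe that $\mathrm{ch}_k(E_0)$ and $\mathrm{ch}_k(E_1)$ are, by construction, De Rham cohomology classes of degree $2k$ on $M$. Since $\Omega^p(M)=\{0\}$ for every $p>\dim M$, the group $H^{2k}_{\mathrm{dR}}(M)$ vanishes whenever $2k>\dim M$; hence both classes are zero in this range, and so is their difference. This immediately yields $\mathrm{ch}_k(\mathcal{L})=0$, which is the assertion.

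The step requiring care is conceptual rather than computational: one must verify that the vanishing is genuinely \emph{inherited} through $\kappa$, and not merely asserted on the supermanifold, where nonzero superforms of arbitrarily high $\Z$-degree do exist. The whole force of Theorem \ref{lastthm} (together with the isomorphism induced by $\kappa$) is that the class representing $\mathrm{ch}_k(\mathcal{L})$ is carried to a class living in the now-trivial group $H^{2k}_{\mathrm{dR}}(M)$. Once this reduction is made explicit no further estimates are needed, and the classical result $2k>\dim M\Rightarrow \mathrm{ch}_k=0$ transfers verbatim to the super setting.
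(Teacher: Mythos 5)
Your proposal is correct and follows exactly the paper's route: the paper also derives this corollary as a direct consequence of Theorem \ref{lastthm} (via the isomorphism in De Rham cohomology induced by $\kappa$), transferring the classical vanishing $H^{2k}_{\mathrm{dR}}(M)=0$ for $2k>\dim M$ to the graded setting. Your explicit remark that the naive dimension count fails on the supermanifold itself, making the reduction through $\kappa$ essential, matches the paper's own discussion preceding the corollary.
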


\noindent \textbf{Acknowledgements.}
Research partly supported by a CONACyT-M\'exico grant, Project CB-2012 179115 (JAV), and a grant from the Spanish
Ministry of Science and Innovation, Project MTM2012-33073 (JM).

\end{document}